\documentclass[edeposit,fullpage,12pt]{uiucthesis2009}
\usepackage{amsthm}
\theoremstyle{definition}
\usepackage{amsmath}
\usepackage{amssymb}
\usepackage{epsfig} 
\usepackage{epstopdf}
\usepackage[nottoc]{tocbibind}
\usepackage{tikz}
\usetikzlibrary{matrix}
\usepackage[square,sort,comma,numbers]{natbib}
\usepackage{graphicx} 
\usepackage{fancyhdr}
\usepackage{hyperref}   
\usepackage{extramarks}
\usepackage{enumitem}
\usepackage{verbatim}
\usepackage{comment}
\usepackage{mathpazo}
\usepackage{euler}
\usepackage{setspace}
\usepackage[margin=1in]{geometry}
\setlength{\baselineskip}{14.0pt} 
\setlength{\parskip}{10pt plus 7pt}
\setlength{\parindent}{20pt}

\newtheorem{theorem}{Theorem}[chapter]
\newtheorem{definition}{Definition}[chapter]
\newtheorem{prop}{Proposition}[chapter]
\newtheorem{lemma}{Lemma}[chapter]

\newtheorem{example}{Example}[chapter]

\newcommand{\lb}{\{}

\newcommand{\R}{\mathbb{R}}
\newcommand{\C}{\mathbb{C}}

\newcommand{\N}{\mathbb{N}}

\newcommand{\disp}{\displaystyle\prod}
\newcommand{\disu}{\displaystyle\bigcup}

\newcommand{\diss}{\displaystyle\sum}
\newcommand{\disg}{\displaystyle\int}
\newcommand{\disl}{\displaystyle\lim}
\newcommand{\dislim}{\displaystyle\lim}

\newcommand{\dislimsup}{\displaystyle\limsup}

\newcommand{\dissup}{\displaystyle\sup}
\newcommand{\disinf}{\displaystyle\inf}
\newcommand{\dismax}{\displaystyle\max}

\newcommand{\Lc}[0]{\mathcal{L}}

\newcommand{\Pc}[0]{\mathcal{P}}

\newcommand{\Tc}[0]{\mathcal{T}}

\newcommand{\kf}[0]{\mathfrak{k}}

\newcommand{\tf}[0]{\mathfrak{t}}

\newcommand{\dx}[0]{\dot{x}}

\renewcommand{\a}{\alpha}
\renewcommand{\b}{\beta}
\renewcommand{\d}{\delta}
\newcommand{\e}{\varepsilon}

\renewcommand{\k}{\kappa}

\newcommand{\m}{\mu}

\newcommand{\p}{\pi}
\newcommand{\ph}{\varphi}

\newcommand{\q}{\xi}

\newcommand{\s}{\sigma}
\renewcommand{\t}{\tau}

\newcommand{\x}{\chi}

\newcommand{\ha}[0]{\hat{a}}

\newcommand{\he}[0]{\hat{e}}

\newcommand{\hs}[0]{\hat{s}}

\newcommand{\hx}[0]{\hat{x}}

\begin{document}
\title{Topological Entropy Bounds for Switched Linear Systems with Lie Structure} 
\author{A. James Schmidt}
\department{Electrical and Computer Engineering} 
\schools{B.A., University of Notre Dame, 2014\\
		B.S., University of Notre Dame, 2014\\
		M.S., University of Illinois at Urbana-Champaign, 2016}
		\msthesis
		\adviser{Assistant Professor Mohamed-Ali Belabbas\\
		Professor Daniel Liberzon}
		\degreeyear{2016}
		\committee{Assistant Professor Mohamed-Ali Belabbas\\ Professor Daniel Liberzon}
		\maketitle
\frontmatter

\singlespacing
\begin{abstract}
	In this thesis, we provide an initial investigation into bounds for topological entropy of switched linear systems.  Entropy measures, roughly, the information needed to describe the behavior of a system with finite precision on finite time horizons, in the limit.  After working out entropy computations in detail for the scalar switched case, we review the linear time-invariant nonscalar case, and extend to the nonscalar switched case.  We assume some commutation relations among the matrices of the switched system, namely solvability, define an ``upper average time of activation'' quantity and use it to provide an upper bound on the entropy of the switched system in terms of the eigenvalues of each subsystem. 
\end{abstract}

\tableofcontents
\mainmatter

\renewcommand{\kf}{\mathfrak{h}}
\renewcommand{\k}{\eta}
\chapter{Introduction}

Since its introduction by Kolmogorov in dynamical systems, entropy has been an invaluable tool for understanding  system behavior.  Ornstein's isomorphism theorem, which entirely characterizes---up to isomorophism---a class of systems (Bernoulli) according to their entropy, further solidified its importance in the dynamical systems community \citep{ornstein}, \citep{katokentropy}.  Broadly, entropy captures uncertainty growth as a system evolves.  Entropy itself has a history which precedes its inclusion in dynamical systems---notably in thermodynamics, and much later information theory as well---and this intuition about uncertainty growth corresponds, at least in the development of the concept, to entropy in other disciplines.

Topological entropy generally measures the exponential growth in information needed to approximate trajectories on a finite time scale---equivalently, to cover the flow with the initial data of an open cover---or the growth in complexity of a system acting on finite measurable sets.  The latter idea corresponds to Kolmogorov's original definition \citep{halmos}  and shares a striking resemblance to the entropy of information theory which Shannon introduced roughly ten years earlier \citep{shannon}.  Adler's introduction of entropy---the entropy approximating trajectories---appeared first as a mild extension of Kolmogorov's entropy, quantifying a map's expansion according to  how it increases (by joins) a minimal number of sets needed to cover a compact space \citep{adler}.  Bowen, on the other hand, considered topological entropy by quantifying the increase in number of points whose flow can be separated by a small distance at some time in their initial segments \citep{bowen}.  These definitions are all interrelated, and the arguments laying out their connections can be found unified in \citep{katoksystems}.  Most results in entropy are for autonomous systems, as time dependence in the dynamics introduces complexities which require new methods to understand  \citep{kawantime}, \citep{kolyada}.  Our present work on switched systems is an initial investigation into some of those complexities. 

Entropy has played a useful role in control theory, where information flow appears in maintaining or inducing key properties through actuators and a network of  sensors.  Original applications of entropy tried quantifying the minimal data rates for guaranteeing the possibility of stabilizability, observability, or controllability \citep{tatikonda}, \citep{nair-entropy-tac-ncs}.  These  approaches extended original entropy considerations, notably in allowing for the evolution of a system in a noncompact space, but still described to the complexity of the system (as it evolves) in the state space.  Later in \citep{coloniusinv}, entropy was used to quantify the growth in number of open loop controls needed to span (in another sense) the space of controls.  This work, and similarly \citep{coloniusnote} and \citep{coloniuscontrol}, used entropy in the setting of continuous time, whereas most previous investigations focused on discrete time systems. Also, the application of topological entropy in the dual problem of state estimation has been studied in, e.g.,\  \citep{savkin-entropy-automatica} and the recent work~\citep{LM-HSCC-2016}.

Of interest to us in the class of dynamical systems are \textit{switched systems}, those which are ``piecewise time invariant,'' formally given by a family of systems with a switching signal---usually dependent on time---which determines which system is active \citep{liberzonswitch}.  Switched systems provide a very nice next-step extension beyond time-invariant systems and insight on how to broach the \textit{prima facie} intractable abstractness of general nonlinear time-varying systems.  Notably, nice properties which individual subsystems may enjoy---such as stability, or equivalently all negative real part eigenvalues---may not hold for the entire switched system. Stability of switched systems has been extensively studied, and among the various techniques for understanding stability are the use of Lyapunov functions, constraints on the frequency of switching, and \textit{commutation relations} among the subsystems.   

We explain this last approach. Suppose we are given a \textit{linear} switched system whose dynamics are comprised of finitely many matrices. We can consider the Lie algebra generated by these matrices---i.e.,\ the set of matrices generated by taking brackets of a finite set of matrices, brackets of those, and so on---and ask whether any structure can be used to determine conditions for stability of the switched system.  Indeed, it turns out that if the Lie algebra is solvable (which means iterated commutators of a certain form eventually vanish) and each individual system is stable, then the switched system is stable \citep{kutepov-82,lie}.  In other words, stability is invariant (or ``stable'') under any switching signal provided stability of the underlying systems. In concrete terms, solvability corresponds to the existence of a change of basis transformation which \textit{simultaneously} takes each matrix to an upper triangular one \citep{serrelie}.  Solvability generalizes commutativity of matrices, and the latter can be easily seen to directly imply the preservation of stability:  by stacking the solution for each system together,  there is a uniform bound on the number of switches \textit{in the solution}.   Naturally things become more complicated for nonlinear systems, but results tying nonlinear systems to the Lie algebra structure can be found in \citep{liberzonmar} and \citep{sharon-margaliot}.  Research into robustness conditions can be found in  \citep{ABL-cdc10}, and the works of \citep{haimovich-braslavsky-tac-13,haimovich-braslavsky-cdc10}  investigate state feedback which induces simultaneous triangularizability in the closed loop.  Other references to switched systems can be found in \citep{liberzonswitch} and \citep{shorten-wirth-...-survey}.
 
We are interested in understanding entropy in switched systems.  Our motivations are twofold. First of all, to our knowledge, little is understood about the problem of computing entropy in switched systems. Secondly, we expect that entropy of switched systems may elucidate how to control switched systems  operating over finite data-rate communication channels. Though the linear time-invariant (LTI) case is completely understood---there is a tight relation between entropy and the minimal data rate required for ``good'' properties---not much is known about corresponding results in the case of switched systems.  Work in this direction has been started in \citep{switched-quantized-cdc11} and \citep{oliver-acc15} and our work with entropy aims at contributing to and developing these efforts.

Our work is confined to \textit{linear} switched systems.  There are a few reasons for this.  First, entropy---unlike stability---is not a local property of solutions.  Entropy measures, more or less, the exponential expansion of a system, and exponential behavior is through and through a feature of linear systems. Secondly, linear systems provide an amenable context for the use of commutation relations, which indeed grounds the bulk of our machinery. 

 Our results include the following.  First, we  investigate entropy properties of scalar switched systems, the computation for which is the basis of our entropy considerations in higher dimensions.  
 	For each subsystem, we define a function of time which represents the percentage time of activation on that time interval, and use the limit supremum as the upper average time percent of activation.  We use these values as weights on the eigenvalues of their respective systems, and take the sum to represent the upper average exponent of the solution.  We show that this quantity is in fact the entropy of the scalar switched system. 
 	
 We then work out a derivation of the entropy expression for continuous-time LTI systems which, though well known, is not well documented in the literature. We fuse this into a study of the nonscalar switched case, which forms our main results. We provide a general lower bound on entropy independent of structure (and therefore weak).    We then consider bounds on entropy using eigenvalue analysis given certain Lie structure, including commutativity and solvability.  The bound is given by the following: assuming solvability of the Lie algebra which the matrices of the switched system generate, there exists a single change of basis matrix making all of them upper triangular.  Each row represents a scalar linear system, whose state is given by the diagonal entry and ``inputs'' the off diagonal entries.  We show that the effects of expansion in each direction are maximized in the first row, with the following relation: the $j$-th state solution---as input---expands at the exponential rate given by the sum of entropies of scalar states starting at the first state and ending at itself.  Thus the first state's entropy appears $n$ times, the second appears $n-1$ times, and so on, so that the entropy of the whole system is bounded above by the sum of the scalar entropies of the $j$-th subsystem (without inputs, or equivalently weighted sum of diagonal entries) multiplied by $n+1-j$, as $j$ runs up to the dimension of the system.  Note that in order to compute this upper bound, the system must be first brought into the normalized (simultaneous triangularizable) form, as each  state's entropy depends on which eigenvalues are associated with it.

The structure of the thesis is as follows.  In \S 2, we review basic definitions of topological entropy and switched systems. In \S 3 we provide an initial step towards computing entropy of switched systems.  In \S 4, we recount entropy for LTI systems, and provide a proof in continuous time. The final chapter, \S5, includes the main results on nonscalar switched systems.  This includes a lower bound given by a weighted sum of the eigenvalues from each subsystem.  We provide bounds---both lower and upper---in the case where the system can be simultaneously diagonalized, and end with an upper bound on a system which is simultaneously triangularizable.

\chapter{Entropy Preliminaries} 
\indent\indent 
Consider a time-varying dynamical system  \begin{equation}\label{eq:system} \dx=f(t,x)\end{equation}  evolving as $x(t)\in \R^n$, with $f(x)\in T_x\R^n\cong \R^n$,  and fix a compact set $K\subset\R^n$ of initial conditions containing the origin, which we assume to be an equilibrium point $f(0)=0$.  Let $\varphi_f(t,x_0)$ denote solution of system \eqref{eq:system}  at time $t$ starting from initial condition $x(0)=x_0$. When the system is fixed, we will drop the dependence of $\ph$ on $f$, or if the system depends on some parameter---as a switched system will depend on the switching signal $\s$---we may only include the parameter to disambiguate.   For $T\geq 0$, we let $\ph(T,K):=\{\ph(T,x):\,x\in K\}$ the set of points where the solution ends up after $T$-units of time starting from a point in $K$.

We are interested in quantifying how much information is needed in order to approximate solutions starting in $K$.  Fix $T,\e>0$ and consider the flow starting initial conditions from $K$ on some finite time horizon $[0,T]$.  We say that a discrete set $S=\{x_1,\ldots,x_k\}\subset K$ is \textit{$(T,\e)$-spanning} if for each $x_0\in K$ there is an $x_i\in S$ satisfying \begin{equation} ||\varphi(t,x_0)-\varphi(t,x_i)||<\e\end{equation} for every $t\in [0,T]$. Which norm $||\cdot||$ we use on $\R^n$ doesn't particularly matter, but for concreteness we will generally take the infinity norm defined by \begin{equation} ||(x_1,\ldots,x_n)||_\infty:=\max_{j=1,\ldots,n}|x_n|.\end{equation}

The condition that $||\varphi(t,x)-\varphi(t,y)||<\e$ for all $t\in[0,T]$ is equivalent to  $$\dissup_{t\in[0,T]}||\varphi(t,x)-\varphi(t,y)||<\e,$$ on account of which inequality we define a norm-induced metric on  function spaces as follows: for  $f,g:\R\rightarrow\R^n$ $$||f(\cdot)-g(\cdot)||_{[0,T]}:=\dissup_{t\in[0,T]}||f(t)-g(t)||_\infty.$$  Though we will always include the time interval the function norm is ranging over, we will sometimes write $||\cdot||_{\Lc_\infty}$ for $||\cdot||_{[0,\infty)}:=\disl_{T\rightarrow\infty}||\cdot||_{[0,T]}$.

Let $s(T,\e)$ denote the minimal cardinality of a $(T,\e)$-spanning set and define the entropy of system \eqref{eq:system} to be \begin{equation} h(f):=\disl_{\e\rightarrow0}\limsup_{T\rightarrow\infty}\frac{\log(s(T,\e))}{T},\end{equation} where we will by default use the natural logarithm.  This base is more amenable to our purposes in that our setting is in continuous time, so the choice will eliminate  the need for an extraneous multiplicative factor in our results, but translating these notions for the sake of encoding (say, with binary strings) may require using base $2$. 

In the definition above, we fixed some compact set $K$ but were rather cavalier about which one to fix.  For simplicity in the computations to follow, we take $K$ to be a closed ball (cube) in $\R^n$ of integer length.  As will become evident in those computations, the size of $K$ really does not matter, but what may not be as obvious is that the shape does not either.  It is a technical construction to verify this, and we omit the proof for simplicity as it elucidates little in terms of entropy.  Suffice it to say that for ``oddly shaped'' sets (e.g.\ cantor-like nowhere dense closed ones), we can embed them into nice ones, and guarantee a uniform bound, dependent only on dimension, for the multiplicative factor on the cardinality of spanning sets.  This factor disappears asymptotically, and we will see why in the computations. 

There is a second definition of entropy which we will use briefly in our initial investigation. Take, again, a discrete set $S=\{x_1,\ldots,x_k\}\subset K$; this time we say that $S$ is $(T,\e)$-\textit{separated} if for every $x_i\neq x_j\in S$, $||\ph(t,x_i)-\ph(t,x_j)||_{[0,T]}\geq \e$, which amounts to the same as: the trajectories starting from $x_i$ and $x_j$ are at least $\e$ far away at some time $t\in[0,T]$. We let $n(T,\e)$ denote the \textit{maximal} cardinality of a $(T,\e)$-separated set and again define entropy as $$h(f):=\disl_{\e\rightarrow0}\limsup_{T\rightarrow\infty}\frac{\log(n(T,\e))}{T}.$$  One can quickly check that these definitions agree; a maximal $(T,\e)$-separated set is also $(T,\e)$-spanning so $n(T,\e)\geq s(T,\e)$, and an $\e$-ball does  not contain points separated by distance greater than $2\e$, so $s(T,\e)\geq n(T,2\e)$, c.f.\ \citep{katoksystems},\citep{liberzonentropy}.

We are interested in entropy of a restricted class of linear time-varying systems, namely switched systems $$\dx=f(t,x)=f_{\s(t)}(x),$$ where $\sigma:\R\rightarrow \mathcal{P}$ is a switching signal; there is no a priori restriction on the cardinality of $\Pc$, but usually it will be at least compact and in most of our examples even finite.  The notation means the following: for $\Pc:=\{t\in\R:\s(t^+)\neq s(t^-)\}$, which we enumerate canonically as $\{t_i\}_{i\in N\subset \N}$ (where $N=\{1,2,\ldots,n_f\}$ if $N\neq \N$) and $t_0=0$, the system $\dx=f_\s(x)$ is piecewise time-invariant on each interval $(t_i,t_{i+1})$.   As switching does not in general preserve stability for a system whose individual dynamics are stable, we will use entropy to measure the emergence of instability in a switched system.

  To investigate properties of switched entropy, we will fix a class $\Sigma$ of switching signals corresponding to a class of dynamical systems $\dx = f_\s(x)$ with $\s\in \Sigma$.  We will then fix the switching signal itself and determine bounds on the entropy.  As this procedure will use nothing particular about our choice of $\sigma$, we will conclude that the bound holds for each switched system in the class. (Notice the similarity with $\d$-$\e$ arguments in analysis; this is generally how all formal $\forall$-arguments go \citep[\S30]{quine}.)

In order to situate the approaches which follow, we make an initial observation.  At a high level, the expansion which entropy is capturing corresponds to the eigenvalues of a (linear) system. On the other hand, the reason that switching can destabilize otherwise stable systems is that it mixes the evolution of solutions from different eigen\textit{spaces}. Because the geometry of mixing is simply intractable, our approach is to find conditions on the shared structure of individual dynamics which allow us to elicit information about the entropy from the eigenvalues of each system. 

\chapter{Scalar Switched Systems}
\section{Stability}

Start with a scalar system \begin{equation}\label{eq:switch} \dx=a_\sigma x\end{equation} and discrete switching signal $\sigma:[0,\infty)\rightarrow \{1,\ldots,k\}$ and let $\chi_i$ represent the indicator function on system $i$: $$ \x_i(s):= \left\{\begin{array}{lll} 1 & \mbox{if}& \s(s)=i\\ 0 & \mbox{else} & \end{array}\right..$$  We define $$\t_i(t):=\disg_0^t\x_i(s)ds$$ as the total time mode $i$ is active on $[0,t]$ and define its ``average in the limit'' as $$\tf_i:=\dislimsup_{t\rightarrow\infty}\dfrac{1}{t}\t_i(t).$$ Note that the limit may not exist, in which case it is not precisely an average, but our use of the quantity in entropy computations does not require the limit to exist.  In general,   $\diss_{i=1}^k\tf_i\leq k$ and when the limit $\tf_i=\dislim_{t\rightarrow\infty}\dfrac{1}{t}\t_i(t)$ exists for each $i$, it is easy to see that $\diss_{i=1}^k\tf_i = 1$.  

Before proceeding, we introduce additional notation, which will be ubiquitous in what follows.  Let $$\begin{array}{lcl}   \k(t)&:=&\diss_{i=1}^ka_i\t_i(t), \\ \kf & := & \diss_{i=1}^k a_i\tf_i=\dislimsup_{t\rightarrow\infty}\frac{1}{t}\k(t)\\ \kf^+& := &\max\{0,\kf\}.\end{array}$$ 
As we will see in proposition \ref{prop:scalin}, it is not necessary in the computation of entropy for the limit of $\frac{1}{t}\t_i(t)$ to exist. We will still, for theoretical interest, investigate conditions under which it does. We will, irrespective of existence of the limit, refer to $\kf$ as the average asymptotic exponent, or just average exponent, and $\tf_i$ as the average asymptotic time of activation for mode $i$. 

 Notice that these definitions depend on the switching signal, but as we generally fix it from the outset, the dependence will remain notationally implicit. 

First we observe that when a scalar switched  system has average negative exponent, it is stable. 
\begin{prop}\label{prop:scalarstable} Suppose for a switched system \eqref{eq:switch} that the average exponent $$ \kf= \diss_{i=1}^ka_i\tf_i<0$$ is negative, with $\kf$ and $\tf_i$ defined as above.   Then the system is globally exponentially stable. \end{prop}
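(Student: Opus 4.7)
The plan is to directly exploit the fact that scalar switched systems admit an explicit solution, and then translate the limsup hypothesis into a uniform exponential decay bound.

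First, I would observe that on any interval where $\sigma$ is constant the equation $\dot x = a_{\sigma(t)} x$ is separable, and concatenating the solutions across switching times gives the closed form
\begin{equation*}
x(t) \;=\; x_0\,\exp\!\Big(\int_0^t a_{\sigma(s)}\,ds\Big) \;=\; x_0\,e^{\kappa(t)},
\end{equation*}
using that $\int_0^t a_{\sigma(s)}\,ds = \sum_{i=1}^k a_i \tau_i(t) = \kappa(t)$. Thus $|x(t)| = |x_0| e^{\kappa(t)}$, and the problem reduces to bounding $\kappa(t)$ from above by a linear function of $t$ with a negative slope, uniformly in $t$.

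Next, I would unpack the hypothesis $\bar\kappa < 0$. Pick any $\lambda$ with $0 < \lambda < -\bar\kappa$. By definition of the limit supremum, there exists $T_0 > 0$ such that $\kappa(t)/t < -\lambda$ for every $t \geq T_0$; equivalently, $\kappa(t) < -\lambda t$ on $[T_0,\infty)$, hence $|x(t)| \leq |x_0| e^{-\lambda t}$ on that tail. The only remaining issue is the transient $[0,T_0]$, where nothing about $\bar\kappa$ gives pointwise control.

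To handle $[0,T_0]$, I would use the trivial bound $\kappa(t) \leq A\,t$, where $A:=\max_i |a_i|$, valid for all $t$ since $\sum_i \tau_i(t)=t$. This gives $|x(t)| \leq |x_0| e^{A T_0}$ on $[0,T_0]$. Multiplying by $e^{\lambda t}e^{-\lambda t}$ and using $e^{\lambda t}\leq e^{\lambda T_0}$ on this interval, I obtain
\begin{equation*}
|x(t)| \;\leq\; |x_0|\,e^{(A+\lambda)T_0}\,e^{-\lambda t}, \qquad t\in[0,T_0].
\end{equation*}
Setting $M := e^{(A+\lambda)T_0}$, the estimates on the two intervals combine into the single bound $|x(t)| \leq M |x_0| e^{-\lambda t}$ for all $t \geq 0$, which is precisely global exponential stability.

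There is no real obstacle here beyond correctly extracting the pointwise inequality from $\limsup \kappa(t)/t < 0$ and stitching together the pre‑$T_0$ and post‑$T_0$ bounds into one uniform exponential estimate. The statement is essentially a restatement of ``negative time‑averaged exponent forces the closed‑form solution to decay exponentially,'' and the proof should be correspondingly short.
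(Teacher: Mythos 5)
Your proof is correct and follows essentially the same route as the paper's: the explicit solution $x(t)=x_0e^{\kappa(t)}$, the limsup hypothesis yielding $\kappa(t)<-\lambda t$ on a tail $[T_0,\infty)$, and a crude bound ($e^{AT_0}$ in your notation, $e^{a_{\max}T_\varepsilon}$ in the paper's) on the transient. If anything, your packaging is slightly tighter: by absorbing the transient into the constant $M=e^{(A+\lambda)T_0}$ you exhibit the single uniform estimate $|x(t)|\le M|x_0|e^{-\lambda t}$ that the phrase ``globally exponentially stable'' literally demands, whereas the paper presents convergence and Lyapunov stability as two separate steps.
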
  
\begin{proof}Because scalar exponentials commute, the solution to \eqref{eq:switch} is given by $$ \varphi(t,x_0)=e^{\k(t)}x_0 $$ and as $\dislimsup_{t\rightarrow\infty}\frac{1}{t}\k(t)=\kf$, for each positive $\hat{\e}<-\kf$, there is $T_{\hat{\e}}$ such that $$ \frac{1}{t}\k(t)<\kf+\hat{\e}$$ whenever $t>T_{\hat{\e}}$. For such $t$, $$\k(t)<(\kf+\hat{\e})t,$$ and we define $\kf+\hat{\e}=:-\lambda<0$. Then $$ |\varphi(t,x_0)|=e^{\k(t)}|x_0|\leq e^{-\lambda t}|x_0|,$$ when $t>T_{\hat{\e}}$, which proves convergence. 
	
	For stability, let $\e>0$ be given.  Then $|\varphi(t,x_0)|\leq e^{-\lambda t}|x_0|$ whenever $t>T_\e$ for some $T_e>0$, with $\lambda$ positive, by the preceding argument on convergence. On the other hand, set $a_{\max}:=\max\{a_1,\ldots,a_k,0\}$, so  $$ |\varphi(t,x_0)|\leq e^{a_{\max} t}|x_0|\leq e^{a_{\max} T_\e}|x_0|$$ for all  $t\leq T_\e$, by the comparison principle.  
	
	Because $a_{\max}\geq 0>-\lambda$, we have $$ |\ph(t,x_0)|\leq e^{a_{\max}T_\e}|x_0|$$ for all time $t\geq 0$.
	
	  Thus $|\ph(t,x_0)|<\e$ as long as $|x_0|<\e e^{-a_{\max}T_\e},$ proving stability. \end{proof}

\section{Entropy}

 \begin{prop}\label{prop:scalin} Fix switching signal $\s:[0,\infty)\rightarrow\{1,\ldots,k\}$ and consider the  scalar  switched linear system as in Equation \eqref{eq:switch} with each system having average time of activation $\tf_i$ for $i=1,\ldots,k$ and average exponent $\kf$ as defined above.   The entropy of the system is: \begin{equation} h(a_\sigma)=\kf^+.\end{equation}  \end{prop}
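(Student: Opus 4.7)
The plan is to reduce the entropy computation to the asymptotic behavior of the quantity $\sup_{t\in[0,T]} \k(t)$, and then to show that, after normalizing by $T$ and taking $\limsup$, this equals $\kf^+$.

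\textbf{Step 1 (explicit solutions and the sup metric).} Because the system is scalar, all exponentials commute and Proposition~\ref{prop:scalarstable} already gave us $\ph(t,x_0)=e^{\k(t)}x_0$. Hence for any $x,y\in K$,
$$ \|\ph(\cdot,x)-\ph(\cdot,y)\|_{[0,T]} \;=\; \Bigl(\dissup_{t\in[0,T]}e^{\k(t)}\Bigr)\,|x-y| \;=\; M(T)\,|x-y|, $$
where $M(T):=\exp\bigl(\dissup_{t\in[0,T]}\k(t)\bigr)\geq 1$ (since $\k(0)=0$). Thus on $K$ the $(T,\e)$-spanning problem collapses to an ordinary one-dimensional covering problem with the Euclidean distance rescaled by $M(T)$.

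\textbf{Step 2 (counting a spanning set).} Take $K$ to be a cube of integer side $c$ (a compact interval in the scalar case). Covering $K$ by intervals of radius $\e/M(T)$ gives
$$ \frac{cM(T)}{2\e} \;\leq\; s(T,\e) \;\leq\; \left\lceil \frac{cM(T)}{2\e}\right\rceil + 1, $$
so
$$ \frac{\log s(T,\e)}{T} \;=\; \frac{\log M(T)}{T} + O\!\left(\frac{1}{T}\right) \;=\; \frac{\dissup_{t\in[0,T]} \k(t)}{T} + o(1). $$
Since the $\e$-dependent constants contribute an $O(1/T)$ term that vanishes in the outer $\e\to 0$ limit, we are reduced to computing
$$ \Lambda \;:=\; \dislimsup_{T\to\infty}\,\frac{1}{T}\dissup_{t\in[0,T]}\k(t). $$

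\textbf{Step 3 (identifying $\Lambda$ with $\kf^+$).} This is the step where I expect the subtlety to lie, because the supremum over $[0,T]$ of $\k$ can be attained in the interior and need not go through $\k(T)$ itself. First, since $\dissup_{t\in[0,T]}\k(t)\geq \k(0)=0$, we get $\Lambda\geq 0$. For the upper bound, given $\hat\e>0$ pick $T_{\hat\e}$ (using the $\limsup$ definition of $\kf$) so that $\k(t)<(\kf+\hat\e)t$ for all $t\geq T_{\hat\e}$. Writing $C:=\dissup_{t\in[0,T_{\hat\e}]}\k(t)<\infty$, for $T\geq T_{\hat\e}$ one has
$$ \dissup_{t\in[0,T]}\k(t) \;\leq\; \max\bigl(C,\,(\kf+\hat\e)^+\,T\bigr), $$
whence $\Lambda\leq (\kf+\hat\e)^+$, and letting $\hat\e\to 0^+$ yields $\Lambda\leq \kf^+$. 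For the matching lower bound, assume $\kf>0$ (otherwise $\kf^+=0$ is already covered); pick $t_n\to\infty$ with $\k(t_n)/t_n\to \kf$. Then $\dissup_{s\in[0,t_n]}\k(s)/t_n\geq \k(t_n)/t_n\to\kf$, so $\Lambda\geq \kf=\kf^+$.

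\textbf{Step 4 (conclusion).} Combining Steps 2--3,
$$ h(a_\s) \;=\; \dislim_{\e\to 0}\dislimsup_{T\to\infty}\frac{\log s(T,\e)}{T} \;=\; \Lambda \;=\; \kf^+, $$
as claimed. The main obstacle is genuinely Step~3: one must be careful that when $\kf<0$ the supremum of $\k$ on $[0,T]$ stays bounded (so the contribution is $0$, not a negative quantity), while when $\kf\geq 0$ the supremum grows linearly at exactly rate $\kf$; this is what produces the positive-part operation in the final formula.
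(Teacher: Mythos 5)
Your proof is correct, and it takes a genuinely different route from the one in the paper, most notably in the lower bound. The paper's argument splits into cases: for $\kf>0$ it builds a spanning set using the bound $\k(t)<(\kf+\d)t$ and proves the matching lower bound by a forward volume argument, comparing $\m(\ph(T,K))=e^{\k(T)}\m(K)$ with the $2\e$-covering of the image; the cases $\kf=0$ and $\kf<0$ are then handled separately via a comparison with the perturbed system $\dx=(a_\s+\e)x$. You instead exploit the fact that in dimension one the trajectory sup-metric is an \emph{exact} rescaling of the Euclidean metric on $K$, namely $\|\ph(\cdot,x)-\ph(\cdot,y)\|_{[0,T]}=M(T)|x-y|$ with $M(T)=\exp\bigl(\sup_{t\in[0,T]}\k(t)\bigr)$, which gives a two-sided count $s(T,\e)\asymp cM(T)/2\e$ (the lower bound coming from pulling the $\e$-balls back to a covering of $K$ by intervals of radius $\e/M(T)$, rather than pushing volume forward). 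This collapses the whole computation to the single quantity $\Lambda=\limsup_T\frac{1}{T}\sup_{[0,T]}\k$, and the positive part $\kf^+$ then emerges automatically from $\k(0)=0$, so all sign cases of $\kf$ are handled uniformly with no comparison-principle detour. Your Step 3 is carried out carefully (the distinction between the interior supremum and the endpoint value $\k(T)$ is exactly the right point to worry about, and your $\max(C,(\kf+\hat\e)^+T)$ bound resolves it). The trade-off is that your exact metric-rescaling identity is special to the scalar case, whereas the paper's volume argument on $\ph(T,K)$ is the template that is reused verbatim in the nonscalar lower bounds of Propositions \ref{prop:nolie} and \ref{prop:simdiag}; your approach is cleaner here but does not generalize as directly.
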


\begin{proof}  Suppose, first,  that $\kf>0$ and let $\delta>0$.  We will show that $h(a_\sigma)\leq \kf+\delta$.  Let $K=[\a,\b]\subset\R$ be a closed interval which without loss of generality we take to have even integer length $\b-\a=:\ell\in 2\N$, and fix $T,\e>0$.  Let $x_0,\hat{x}_0\in K$.  Then $$ |\varphi(t,x_0)-\varphi(t,\hat{x}_0)|=|e^{\k(t)}(x_0-\hat{x}_0)|=e^{\k(t)}|x_0-\hat{x}_0|.$$ Because $\dislimsup_{t\rightarrow\infty}\frac{1}{t}\k(t)= \kf$, there is $T_\delta>0$ such that \begin{equation}\label{limdelta} \frac{1}{t}\k(t)<\kf+\delta\end{equation} whenever $t>T_\delta$. Then  $\k(t)<(\kf+\delta)t$ for all $t>T_\delta$.  
	
	Consequently, $$|\varphi(t,x_0)-\varphi(t,\hat{x}_0)|<e^{(\kf+\delta)t}|x_0-\hat{x}_0|$$ for all $t>T_\delta$, and for $t\leq T_\delta$, with $a_{\max}:=\max\{a_1,\ldots,a_k,0\}$ as defined in the proof of proposition \ref{prop:scalarstable}, $$ ||\varphi(\cdot,x_0)-\varphi(\cdot,\hat{x}_0)||_{[0,T_\d]}=\dismax_{t\in[0,T_\d]} e^{a_{\max}t}|x_0-\hat{x}_0|= e^{a_{\max}T_\delta}|x_0-\hat{x}_0|,$$ which, in particular, is finite. 
	
	Putting these together, $$\begin{array}{ll} ||\ph(\cdot,x_0)-\ph(\cdot,\hx_0)||_{[0,T]} & \leq \max\left\{||\ph(\cdot,x_0)-\ph(\cdot,\hx_0)||_{[0,T_\d]}, ||\ph(\cdot,x_0)-\ph(\cdot,\hx_0)||_{[T_\d,T]}\right\}\\ 
	&  = e^{\ha_T}|x_0-\hx_0|\end{array}$$ with $\ha_T:=\max\{a_{\max}T_\d,(\kf+\d)T\}$.

Define $\hs_T:=\frac{1}{\e}e^{\ha_T}$ and $\hat{\e}_{T}:=\hs_{T}^{-1}$.  Select $S_{T,\e}\subset K$ to be $\ell\hat{s}_T/2$ evenly spaced points separated by distance $2\hat{\e}_T$ (recall $\ell=\b-\a$ is even), $$S_{T,\e}:=\{\alpha+\hat{\e}_T,\alpha+3\hat{\e}_T,\ldots,\b-\hat{\e}_T\},$$ which by construction is $(T,\e)$-spanning.  Then $$ \log(\# S_{T,\e})= \log\left(\frac{\ell}{2\e}(e^{\ha_T})\right) = \log\left(\frac{\ell}{2\e}\right) +\ha_T, $$ which upper bounds the  cardinality $s(T,\e)$ of a \textit{minimal} $(T,\e)$-spanning set.

 Now we can compute a bound on the entropy: $$\begin{array}{ll} h(a_\s) & := \disl_{\e\rightarrow0}\dislimsup_{T\rightarrow\infty}\frac{1}{T}\log s(T,\e) \\ 
& \leq \disl_{\e\rightarrow0}\dislimsup_{T\rightarrow\infty} \frac{1}{T}\log \#S_{T,\e} \\
 & = \disl_{\e\rightarrow\infty}\dislimsup_{T\rightarrow\infty}\frac{1}{T}( \log\left(\frac{\ell}{2\e}\right) +\ha_T)\\
 & =  \disl_{\e\rightarrow0}\dislimsup_{T\rightarrow\infty}\frac{1}{T}\big(\log\left(\frac{\ell}{2\e}\right)+\max\{a_{\max}T_\d,(k_p+\d)T\}\big)\\ 
 & =\kf+\d,\end{array} $$ where the last equality follows from the fact that neither $\kf$ nor $\d$ depends on $\e$.  Since $\d>0$ is arbitrary, this sequence of relations shows that $h(a_\s)\leq \kf$.

To prove the opposite bound, we use a measure argument.  With $T,\e>0$, suppose that $S_\e(T)$ is any $(T,\e)$-spanning set.  As $\ph(T,K)=\{\ph(T,x):\,x\in K\}$ is covered by $\#S_{T,\e}$ intervals of length $2\e$, the measure of $\ph(T,K)$ must be bounded above by  $2\e\#S_{T,\e}$,  and therefore the number of points needed in order to $(T,\e)$-span is lower bounded by $\frac{1}{2\e}\m(\ph(T,K))$.  We compute: $$\begin{array}{ll} \m(\ph(T,K)) & = \m(e^{\k(T)}K)\\ & = \m(e^{a_k\t_k(T)+\ldots+a_1\t_1(T)}K)\\ & = \m(e^{a_k\t_k(T)}e^{a_{k-1}\t_{k-1}(T)+\ldots+a_1\t_1(T)}K)\\ & =e^{a_k\t_k(T)} \m(e^{a_{k-1}\t_{k-1}(T)+\ldots+a_1\t_1(T)}K)\\& \;\;\;\;\;\;\;\vdots\\& = e^{a_k\t_k(T)}\cdots e^{a_1\t_1(T)}\m(K)\\ & =e^{\k(T)}\m(K).\end{array}$$ The fourth line follows from scale invariance of Lebesgue measure. 

As $\dislimsup_{T\rightarrow\infty}\frac{1}{T}{\k(T)}=\kf$, given any $\d>0$ we have a sequence $\{t_i\}\xrightarrow{i\rightarrow\infty}\infty$ such that $\kf-\d<\frac{1}{t_i}\k(t_i)$ for all $i$.  Thus, we have $$\begin{array}{ll}\dislimsup_{T\rightarrow\infty}\frac{1}{T}\log(\#S_{T,\e}) & >\dislimsup_{T\rightarrow\infty}\frac{1}{T}\big(\log(\ph(T,\e))-\log(\frac{\m(K)}{2\e})\big) \\&= \dislimsup_{T\rightarrow\infty}\frac{1}{T}\big(\k(T)-\log(\frac{\m(K)}{2\e})\big) 
\\ & = \dislimsup_{T\rightarrow\infty}\frac{1}{T}\k(T)
\\ & \geq \dislimsup_{i\rightarrow\infty}\frac{1}{t_i}\k(t_i)
\\ & \geq \kf-\d.\end{array}$$  Because this holds for arbitrary $(T,\e)$-spanning set and for any $\d>0$, it also holds for any minimal $(T,\e)$-spanning set as well and for $\d=0$, proving the lower bound.

 To complete the proof, we consider the case where $\kf\leq 0$ and start by supposing that $\kf=0$., Consider the dynamical system \begin{equation}
\dx = (a_\sigma+\e)x.
\end{equation} From the above, $\kf+\e=\e>0$, so $h(a_\sigma+\e)=\e$.  By the comparison  principle any solution to $\dx = a_\sigma x$ will be bounded by a solution with same initial condition to $\dx= (a_\sigma+ \e)x$.  Because these systems are linear, the same is true for separation of solutions: $$\begin{array}{ll} |\ph_\s(t,x_0)-\ph_\s(t,y_0)| & =|\ph_\s(t,x_0-y_0)|\\ & \leq |\ph_{\s+\e}(t,x_0-y_0)|\\ & = |\ph_{\s+\e}(t,x_0)-\ph_{\s+\e}(t,y_0)|.\end{array}$$	  Hence $h(a_\sigma)\leq h(a_\sigma+\e)$ and as this is true for every $\e>0$, $h(a_\sigma)\leq 0$.  Since $h\geq 0$ by definition, this proves that $h(a_\sigma)=0$. 

For $\kf<0$, we can either apply the preceding argument (this time with $\e>-\kf$, so that $\e+\kf>0$ arbitrary), or use the previous result, Proposition \ref{prop:scalarstable}: for $x_0$ small enough $||\ph(\cdot,x_0)||_{\Lc_\infty}<\e$ and then by linearity for initial points $x_0, y_0$ close  enough, their separation is similarly bounded $||\ph(\cdot,x_0)-\ph(\cdot,y_0)||_{\Lc_\infty}=||\ph(\cdot,x_0-y_0)||_{\Lc_\infty}<\e$. Hence a fixed, finite, number of initial points from $K$ is needed to $(T,\e)$-span for every $T\geq 0$.  \end{proof}

It is possible also to show the lower bound using separated sets.  We provide this argument as it gives another point of view to see that entropy  is well defined even when the limit of $\t_i(t)/t$ does not exist.  The measure argument provided above paves the way for most arguments in higher dimensional cases, but separated sets will reappear in the proof of Proposition \ref{prop:simdiag} .  We will show analogously that $h(a_\sigma)\geq \kf-\delta$ for arbitrary $\delta>0$, and   therefore that $h(a_\sigma)\geq \displaystyle\inf_{\delta>0}(\kf-\delta)=\kf$. 

Since $\dislimsup_{t\rightarrow\infty}$$\frac{1}{t}\k(t)=\kf$, there is an increasing sequence $\Tc=\{t_1,t_2,\ldots\}\subset \R$ with $\disl_{i\rightarrow\infty}t_i=\infty$ for which $$\kf-\d<\frac{1}{t_i}\k(t_i)$$ for every $t_i\in \Tc$.  For such $t_i$, $t_i(\kf-\d)<\k(t_i)$ and consequently $$ e^{(\kf-\d)t_i}|x_0-\hx_0|\leq e^{\k(t_i)}|x_0-\hx_0| \leq ||\ph(\cdot,x_0)-\ph(\cdot,\hx_0)||_{[0,t_i]}.$$

Let $\ha_T:=(\kf-\d)T$, $\hs_T=\frac{1}{\e}e^{\ha_T}$ and $\hat{\e}_T=\hs^{-1}$, and define $S_{T,\e}$ as before, with this new $\hat{\e}_T$, with cardinality $\hs_T\ell/2$; it is $(T,\e)$-separated by construction whenever $T\in\Tc$ since  for such $t_i$, and $x_k,x_j\in S_{T,\e}$, $$||\ph(\cdot,x_k)-\ph(\cdot,x_j)||_{[0,t_i]} \geq ||\ph(\cdot,x_k)-\ph(\cdot,x_k+\hat{\e}_T)||_{[0,t_i]} \geq e^{(\kf-\d)t_i}\hat{\e}_T=\e.$$

Counting, $\log(\#S_{T,\e}) = \log(\ell/2\e)+\ha_T$ which is  bounded above by $n(T,\e)$, the cardinality of a maximal $(T,\e)$-separated set, for $T\in \Tc$.  Now applying the definition of entropy with separated sets, we have $$\begin{array}{ll} h(a_\s) & := \disl_{\e\rightarrow0}\dislimsup_{T\rightarrow \infty}\frac{1}{T}\log n(T,\e)\\ & \geq \disl_{\e\rightarrow\infty}\dislimsup_{T\in \Tc}\frac{1}{T}\log n(T,\e)\\ & \geq \disl_{\e\rightarrow\infty}\dislimsup_{T\in \Tc}\frac{1}{T}\log \#S_{(T,\e)} \\
&= \disl_{\e\rightarrow\infty}\dislimsup_{T\in \Tc}\frac{1}{T}(\log(\ell/2\e)+\ha_T)\\ & 
=\disl_{\e\rightarrow\infty}\dislimsup_{T\in \Tc}\frac{1}{T}(\log(\ell/2\e)+(\kf-\d)T)\\ &= \kf-\d. 
\end{array}$$ Since $\d<0$ is arbitrary, this sequence of relations shows that $h(a_\s)\geq \kf$.

Nothing in our proof required that the switching signal have finite image.  It is possible that $\sigma:[0,\infty)\rightarrow \N$, though we will need  the limit supremum $\kf=\diss_{i<\infty}a_i\tf_i$ to be finite if we want a bound on entropy.  For example, consider the following recursively defined switching signal: on $[0,1)$ (step 0), define $\sigma(t)\equiv a_1$.  At step $i$ (on $[i,i+1)$) define the switching signal by $$ \sigma(t):= \left\lb \begin{array}{lll} \sigma(t-1) & \mbox{if} & \lfloor t\rfloor < 1-\frac{1}{2^{i-1}}\\ a_i & \mbox{else} &\end{array}\right..$$ Clearly each $\tau_i$ has limit $\tf_i=\frac{1}{2^i}$.  For a scalar system with this switching signal, the entropy will indeed be $h(a_\sigma) = \diss_{i<\infty}a_i\tf_i$ as long as the sum is finite.

\subsection{Existence of $\kf$}

\indent\indent Even though for entropy we do not need the limit $\lim\frac{1}{t}\t_i(t)$ to exit, we nevertheless investigate some conditions under which it does. 

\begin{prop}\label{prop:period} Suppose that $\sigma:[0,\infty)\rightarrow\{1,\ldots,k\}$ is periodic (so that for some $T>0$, $\s_p(t)=\s_p(t+T)$ for all $t\in\R_+$). Then $\kf=\disl_{t\rightarrow\infty}\frac{1}{t}\k(t)$ exists. \end{prop}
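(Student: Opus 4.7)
The plan is to show that periodicity forces each individual $\tau_i(t)/t$ to converge, from which convergence of $\k(t)/t$ follows immediately by linearity.

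First I would establish the key near-additivity property for $\tau_i$: because $\chi_i(s) = \chi_i(s+T)$ for all $s \ge 0$, one has $\tau_i(t+T) = \tau_i(t) + c_i$, where I set $c_i := \tau_i(T) = \int_0^T \chi_i(s)\,ds$. Iterating, $\tau_i(nT) = nc_i$ for every $n \in \N$. More generally, for any $t \ge 0$ I would write $t = nT + r$ with $n := \lfloor t/T \rfloor$ and $r \in [0,T)$, and conclude $\tau_i(t) = nc_i + \tau_i(r)$.

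Next I would pass to the limit. Since $0 \le \tau_i(r) \le r < T$, the remainder $\tau_i(r)$ is uniformly bounded in $t$. Dividing by $t = nT + r$ gives
\[ \frac{\tau_i(t)}{t} = \frac{nc_i}{nT + r} + \frac{\tau_i(r)}{nT + r}, \]
and as $t \to \infty$ we have $n \to \infty$, so the first term tends to $c_i/T$ and the second to $0$. Thus the limit $\bar{\tau}_i := \lim_{t \to \infty} \tau_i(t)/t = c_i/T$ exists (not merely as a limsup) for every $i \in \{1,\ldots,k\}$.

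Finally, since $\k(t) = \sum_{i=1}^k a_i \tau_i(t)$ is a finite linear combination and each $\tau_i(t)/t$ converges, the limit
\[ \bar{k} = \lim_{t \to \infty} \frac{\k(t)}{t} = \sum_{i=1}^k a_i \bar{\tau}_i = \frac{1}{T}\sum_{i=1}^k a_i c_i \]
exists, agreeing with $\limsup$. There is no real obstacle here beyond bookkeeping of the remainder $r$; the essential content is that periodicity converts the definition of $\tau_i$ into an arithmetic progression up to a bounded error, and Cesàro-type averaging over a bounded error is trivial.
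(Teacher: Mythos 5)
Your proof is correct, but it takes a different route from the paper's. The paper deliberately gives no direct argument here: it treats the proposition as a corollary of the later, more general ``folding'' proposition, since a periodic signal induces a folding of window length $T_w=T$ on which the window totals $\overline{\t}_i(n,T_w)$ are literally identical for all $n$, so the hypothesis of that proposition (equality on all but finitely many windows, or the $\e$-approximate version) holds trivially. You instead exploit periodicity directly: the exact additivity $\t_i(nT+r)=n\,\t_i(T)+\t_i(r)$ with remainder uniformly bounded by $T$, followed by division by $t=nT+r$. What your route buys is a short, self-contained argument that also exhibits the limit in closed form, namely $\tf_i=\t_i(T)/T$ and $\kf=\frac{1}{T}\diss_{i=1}^k a_i\t_i(T)$, which the paper's route does not make explicit; what the paper's route buys is uniformity, since the same machinery covers the strictly larger class of signals whose window averages merely stabilize eventually, of which the periodic case is the degenerate instance. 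Both arguments are sound, and your bookkeeping of the remainder term is correct.
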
 

Because this statement is a special case of a more general result, we omit the proof.

\begin{definition} By a \textit{folding of length} $T_w$ we mean a partition of $\R_+:=[0,\infty)$ into intervals each having equal length $T_w$: $$[0,\infty)= \disu_{n\in\N}[nT_w,(n+1)T_w).$$ A \textit{window} is an interval of the partition.  \end{definition} Recall that $\t_i(t) =\disg_0^t\x_{\s(s)=i}ds$ gives the total time mode $i$ is active on time interval $[0,t)$.  Then define $\overline{\t}_i(n,T_w):= \t_i((n+1)T_w)-\t_i(nT_w)$, and similarly $\overline{\kf}(n,T_w)=\kf((n+1)T_w)-\kf(nT_w)$. 

\begin{prop}   Suppose there is  a folding of length $T_w>0$ such that $\overline{\t}_i(n,T_w)\neq\overline{\t}_i(m,T_w)$ for only finitely many $n\neq m$, for all $i=1,\ldots,k$.  Then the  limit $ \kf=\disl_{t\rightarrow\infty}\frac{1}{t}\k(t)$ exists. \end{prop}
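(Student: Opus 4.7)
The plan is to show that the hypothesis forces $\overline{\t}_i(n,T_w)$ to be eventually constant in $n$ for each $i$, and then to exploit this to compute the limit directly by slicing time into windows of length $T_w$.

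First I would unpack the hypothesis. The statement ``$\overline{\t}_i(n,T_w)\neq \overline{\t}_i(m,T_w)$ for only finitely many $n\neq m$'' means the set of unordered pairs on which the values disagree is finite. If infinitely many distinct values were attained, there would be infinitely many disagreeing pairs; so for each $i$ there exist a constant $c_i\geq 0$ and an integer $N_i$ such that $\overline{\t}_i(n,T_w)=c_i$ for all $n\geq N_i$. Since $\sum_{i=1}^k \overline{\t}_i(n,T_w)=T_w$ for every $n$, we obtain $\sum_i c_i = T_w$. Setting $N_0:=\max_i N_i$ and $C:=\sum_{i=1}^k a_i c_i$, we get $\overline{\kf}(n,T_w)=C$ for all $n\geq N_0$.

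Next I would write $\k(NT_w)$ telescopically as $\sum_{n=0}^{N-1}\overline{\kf}(n,T_w)$. Split the sum at $N_0$: the first $N_0$ terms form a fixed finite constant $A$, and the remaining $N-N_0$ terms each contribute $C$. Thus $\k(NT_w)=A+(N-N_0)C$, so $\k(NT_w)/(NT_w)\to C/T_w$ as $N\to\infty$.

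For an arbitrary $t\geq 0$, write $t = NT_w + r$ with $N=\lfloor t/T_w\rfloor$ and $0\leq r<T_w$. Then $\k(t)-\k(NT_w)=\sum_{i=1}^k a_i(\t_i(t)-\t_i(NT_w))$, and each increment $\t_i(t)-\t_i(NT_w)$ lies in $[0,T_w]$, so $|\k(t)-\k(NT_w)|\leq T_w\max_i |a_i|$, a constant independent of $t$. Dividing by $t$ and letting $t\to\infty$, both $\k(NT_w)/t\to C/T_w$ and the remainder vanishes, giving $\lim_{t\to\infty}\k(t)/t = C/T_w=\kf$.

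The only real subtlety is interpreting the ``finitely many pairs'' condition correctly---this step is what turns an a priori bizarre hypothesis into the clean statement that each $\overline{\t}_i(\cdot,T_w)$ is eventually constant. Once that reduction is made, the rest is straightforward bookkeeping. I would note in passing that Proposition~\ref{prop:period} is a special case, obtained by taking $T_w$ equal to the period, in which case every window has identical activation times and $N_0=0$.
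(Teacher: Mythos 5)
Your proof is correct, and it takes a genuinely different (more elementary) route than the paper: the paper gives no standalone proof of this proposition, treating it instead as a special case of the next, more general proposition, in which the window increments $\overline{\k}(n,T_w)$ are only required to become $\e$-close for all large $n,m$; that argument proceeds by an $\e$--$N_\e$ estimate on the telescoped sum $\frac{1}{t}\sum_n\overline{\k}(n,T_w)+o(t)$ and matching upper and lower bounds on the limit supremum and infimum. You instead exploit the exact hypothesis to conclude that each $\overline{\t}_i(\cdot,T_w)$ is eventually constant, which collapses the telescoped sum to the exact affine expression $A+(N-N_0)C$ and gives the limit $C/T_w$ by direct bookkeeping --- shorter and self-contained, at the price of not covering the approximate case the paper actually needs later. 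Two minor points. First, your justification of eventual constancy is slightly incomplete: ruling out ``infinitely many distinct values'' is not enough (two values each attained infinitely often also produce infinitely many disagreeing pairs); the clean argument is that if the sequence were not eventually constant one could extract disagreeing pairs with both indices arbitrarily large, hence infinitely many pairs. (Read literally, the hypothesis is even stronger --- a single outlier index already disagrees with infinitely many others --- but the paper's parenthetical ``alt.''\ rephrasing in the following proposition confirms that ``equal for all sufficiently large $n,m$'' is the intended reading, which is exactly what you use.) Second, the identity $\sum_{i=1}^k\overline{\t}_i(n,T_w)=T_w$ is true but not actually needed for the limit computation. Your closing observation that the periodic case follows by taking $T_w$ equal to the period with $N_0=0$ is consistent with the paper's remark that that proposition is a special case of a more general result.
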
 

This proposition replaces the feature of periodicity responsible for guaranteeing the limit in proposition \ref{prop:period}.  On each period there is an average time each mode is active and that average is the same as the average over $n$ (whole) periods.  But integrating \textit{over} a period, the function introduces some bounded variance, which disappears when we divide by $t\rightarrow\infty$.  

\begin{example} Suppose that $k=2$ and $\tf_i=1/2$. Here is a switching signal which is not periodic but satisfies the conditions of the proposition: let $T_w=1$ and on $[nT_w,(n+1)T_w)$, define $\sigma$ by $$ \sigma\restriction_{[nT_w,(n+1)T_w)}(t):= \left\{\begin{array}{lll} 1 & \mbox{if} & t-T_w\in \frac{[2l,2l+1)}{2^n}\\ 2 & \mbox{else,} & \end{array}\right.$$ where $l=0,\ldots, 2^{n-1}$ partitions the unit interval into even and odd subintervals of length $1/2^n$.  
\end{example}

Still, this condition implies that $\frac{1}{t}\t_i$ has a limit (and therefore $\k/t$ as well), but from $\k_t=\sum a_i\t_i$ it is possible that the left-hand side converges while terms in the sum on the right-hand side do not (e.g. $1=\sin^2+\cos^2$).  Furthermore, we can relax the requirement of equality on all but finitely many windows, as long as the window eventually becomes a good approximation.  
\begin{prop} Suppose there is a folding of length $T_w>0$ for which given any $\e>0$, $|\overline{\k}(n,T_w)-\overline{\k}(m,T_w)|\geq \e$ for only finitely many $n\neq m$ (alt. there is an $N_\e$ such that $|\overline{\k}(n,T_w)-\overline{\k}(m,T_w)|< \e$ whenever $n,m>N_\e$).   Then the  limit $ \k=\disl_{t\rightarrow\infty}\frac{1}{t}\k(t)$ exists.\end{prop}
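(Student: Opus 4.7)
The plan is to extract convergence of the window increments and then pass to the running average in two steps: first along the discrete times $nT_w$ via a Cesàro argument, then upgrade to continuous $t$ by controlling the leftover partial window.

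First I would set $c_n := \overline{\k}(n,T_w) = \k((n+1)T_w) - \k(nT_w)$. The hypothesis states precisely that $(c_n)$ is a Cauchy sequence in $\R$: given any $\e > 0$ there is $N_\e$ with $|c_n - c_m| < \e$ whenever $n,m > N_\e$. Hence $c_n$ converges to some finite limit $c \in \R$. Telescoping the definition of $\k(nT_w) = \sum_{j=0}^{n-1} c_j$ (using $\k(0)=0$), the standard Cesàro lemma gives
\begin{equation*}
\frac{\k(nT_w)}{nT_w} \;=\; \frac{1}{T_w}\cdot\frac{1}{n}\sum_{j=0}^{n-1}c_j \;\xrightarrow[n\to\infty]{}\; \frac{c}{T_w}.
\end{equation*}

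Second, I would extend this to arbitrary $t \to \infty$. Write $t = nT_w + r$ with $0 \le r < T_w$, so that $n = \lfloor t/T_w \rfloor$ and $n \to \infty$ as $t \to \infty$. Setting $a_{\max} := \max_i |a_i|$, the increment over the trailing partial window is bounded uniformly:
\begin{equation*}
|\k(t) - \k(nT_w)| \;=\; \Bigl|\sum_{i=1}^k a_i(\t_i(t) - \t_i(nT_w))\Bigr| \;\le\; a_{\max}\, r \;\le\; a_{\max} T_w.
\end{equation*}
Therefore
\begin{equation*}
\frac{\k(t)}{t} \;=\; \frac{\k(nT_w)}{nT_w}\cdot\frac{nT_w}{t} \;+\; \frac{\k(t)-\k(nT_w)}{t},
\end{equation*}
where $nT_w/t \to 1$ and the last term is $O(1/t)$. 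Combining with the discrete-time convergence gives $\k(t)/t \to c/T_w$, so the limit $\kf$ exists and equals $c/T_w$.

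The argument is essentially a Cesàro computation plus a remainder estimate; I do not expect any serious obstacle. The only delicate point is confirming that the Cauchy-type hypothesis on $\overline{\k}(n,T_w)$ genuinely yields a limit $c$ (as opposed to, say, only eventual equality up to $\e$ along subsequences), but the quantifier structure stated in the proposition is exactly the Cauchy criterion, so this is immediate. Note also that, as the remark following the previous proposition emphasizes, we deliberately do not ask individual averages $\tf_i = \lim \t_i(t)/t$ to exist; only the aggregate $\k$ needs to stabilize on windows, which is why the hypothesis is phrased in terms of $\overline{\k}$ rather than the component-wise $\overline{\t}_i$.
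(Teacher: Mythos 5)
Your proof is correct and follows essentially the same route as the paper's: decompose $\k(t)$ into a sum of window increments plus a uniformly bounded trailing remainder, use the Cauchy hypothesis to extract a limiting increment value, and conclude by averaging. One small bonus of your write-up: you correctly identify the limit as $c/T_w$ (since $\lfloor t/T_w\rfloor / t \to 1/T_w$), whereas the paper's proof asserts $T_m(t)/t \to 1$ and hence names the limit as $\lim_n \overline{\k}(n,T_w)$ itself, which is only accurate when $T_w = 1$.
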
 
\begin{proof} 
Suppose that such a folding of length $T_w>0$ exists and let $T_m(t)= \arg\sup_n\{n\cdot T_w<t\}-1$.   Then computing the integral in window intervals, we have $$ \frac{1}{t}\k(t)=\frac{1}{t}\diss_{n=0}^{T_m}\overline{\k}(n,T_w)+o(t),$$ where $o(t)=\frac{1}{t}\sum a_i(\t_i(t)-\t_i(T_m))\leq k\max_i\{a_1,\ldots,a_k\}T_w$.  As each term is bounded above, it is clear that this term indeed approaches zero as $t\rightarrow\infty$.  Fix $\e>0$; by hypothesis, there is a $\k$ and $N_\e>0$ such that $|\overline{\k}(n,T_w)-\k|<\e$ whenever $n>N_\e$.  Indeed, $\d_\e=\dissup_{n,m>N_\e}|\overline{\k}(n,T_w)-\overline{\k}(m,T_w)|<\e$ implies that $\k(\e)=\inf_{n>N_\e}\overline{\k}(n,T_w)+\frac{\d_\e}{2}$ satisfies this inequality. Since $\e>0$ is arbitrary, we might as well assume that $\k=\liminf_n\overline{\k}(n,T_w)$, and we claim that this is in fact the limit.  

We want to show that there is a $T_\e$ for which $|\frac{1}{t}\k(t)-\k|<\e$ whenever $t>T_\e$.  We compute: $$\frac{1}{t}\diss_{n=0}^{T_m}\overline{\k}(n,T_w)+o(t) = \frac{1}{t}\left(\diss_{n=0}^{N_\e-1}\overline{\k}(n,T_w)+\diss_{n=N_\e}^{T_m(t)}\overline{\k}(n,T_w)\right)+o(t) =\frac{1}{t}\diss_{n=N_\e}^{T_m(t)}\overline{\k}(n,T_w)+o(t).$$ Here $o(t) = \frac{1}{t}\left(\diss_{n=0}^{N_\e-1}(\overline{\k}(n,T_w))+\diss_{i=1}^ka_i(\t_i(t)-\t_i(T_m))\right)$. For $n>N_\e$, we have $|\overline{\k}(n,T_w)-\k|<\e$ and so $$ \frac{1}{t}\k(t)=\frac{1}{t}\diss_{n=N_\e}^{T_m(t)}\overline{\k}(n,T_w)+o(t)\leq \frac{1}{t}\left(\diss_{n=N_\e}^{T_m(t)}\k+\e\right)+o(t)=\frac{1}{t}(T_m(t)-N_\e)(\k+\e)+o(t).$$  Rewriting, we thus obtain: $$ \frac{1}{t}\k(t)\leq \frac{T_m(t)}{t}(\k+\e)+o(t).$$ Since $\frac{T_m(t)}{t}\xrightarrow{t\rightarrow\infty}1$, we obtain $$\dislimsup_{t\rightarrow\infty}\frac{1}{t}\k(t)\leq \dislimsup_{t\rightarrow\infty}\frac{T_m(t)}{t}(\k+\e)+o(t) = \k+\e.$$ Now $\e>0$ is arbitrary, so $\dislimsup_{t\rightarrow\infty}\frac{1}{t}\k(t)\leq \k$.  

For the lower bound, we repeat the same argument using the lower bound $\k-\e<\overline{\k}(n,T_w)$ for $n>N_\e$.  \end{proof}


\chapter{Nonscalar  Linear Time-Invariant Systems}

We restate a well-known result in the theory of topological entropy for linear time-invariant systems, and a proof of the discrete-time case can be found in, e.g.,\ \citep[Theorem 2.4.2]{savkin}, but we here provide an argument in continuous time. 
\begin{theorem}\label{theorem:linear} Let  $$ \dx=Ax$$ be a linear time-invariant system.  Then \begin{equation} h(A) = \diss_{\lambda}\max\{0,\mbox{re}(\lambda)\}=:H(A),\end{equation} where the sum is taken over the eigenvalues of $A$. \end{theorem}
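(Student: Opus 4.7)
The plan is to prove the two inequalities $h(A) \leq H(A)$ and $h(A) \geq H(A)$ separately, working in coordinates adapted to the real Jordan decomposition of $A$.  Write $A = S + N$ with $S$ semisimple, $N$ nilpotent, and $[S,N] = 0$, so that $e^{At} = e^{St}e^{Nt}$ with $e^{Nt}$ polynomial in $t$.  Correspondingly, $\R^n$ splits as a direct sum of real generalized eigenspaces $E^{\l}$, each invariant under $e^{At}$; on $E^{\l}$ one has the basic estimate $\|e^{At}v\| \leq C(1 + t^{m-1})e^{\mbox{re}(\l)t}\|v\|$, where $m$ is the size of the largest Jordan block with eigenvalue $\l$.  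By the shape-independence of entropy mentioned in Chapter~2, I am free to take $K$ to be a cube aligned with the eigenspace decomposition.

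For the upper bound, fix $\d > 0$ and decompose a difference $x - y \in \R^n$ as $\diss_j v_j$ with $v_j \in E^{\l_j}$.  The estimate above gives
$$\|e^{At}(x-y)\|_{[0,T]} \;\leq\; \diss_j C_j (1 + T^{m_j - 1}) e^{\max\{0,\mbox{re}(\l_j)\}T}\|v_j\|.$$
I then build a $(T,\e)$-spanning set by gridding $K$ so that the spacing along $E^{\l_j}$ is at most $\e/(nC_j(1+T^{m_j-1})e^{\max\{0,\mbox{re}(\l_j)\}T})$; the displayed inequality guarantees that this grid spans.  The cardinality is the product over $j$ of the directional counts, with $d_j := \dim E^{\l_j}$ factors per eigenvalue $\l_j$.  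Taking the logarithm, dividing by $T$, and letting $T \to \infty$ makes the polynomial prefactors $(1+T^{m_j-1})$ and the constants contribute $o(1)$, leaving $\diss_j d_j \max\{0,\mbox{re}(\l_j)\} = H(A)$.

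For the lower bound, I adapt the measure argument used in Proposition~3.2.  Let $E^u$ be the sum of generalized real eigenspaces for eigenvalues with $\mbox{re}(\l) > 0$, let $d_u := \dim E^u$, and let $A_u$ denote the restriction of $A$ to $E^u$, noting $\mbox{tr}(A_u) = H(A)$.  Choose coordinates so that $E^u$ is a coordinate subspace, and take $K = K_c \times K_u$ with $K_u$ a cube in $E^u$.  Given any $(T,\e)$-spanning set $\{y_1,\dots,y_N\} \subset K$, restricting attention to initial conditions of the form $(0,x_u)$ with $x_u \in K_u$ and projecting the $y_i$ onto $E^u$ produces a family that $(T,\e)$-spans $K_u$ under the invariant flow $e^{A_u t}$.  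At time $T$ this forces $e^{A_u T}K_u$ to be covered by $N$ Euclidean balls of radius $\e$ in $E^u$, and comparing Lebesgue measures in $E^u$ gives $e^{\mbox{tr}(A_u)T}\mu_u(K_u) = \mu_u(e^{A_u T}K_u) \leq N\cdot c_{d_u}\e^{d_u}$.  Taking $\log$, dividing by $T$, and letting first $T\to\infty$ and then $\e\to 0$ yields $h(A) \geq \mbox{tr}(A_u) = H(A)$.

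The main obstacle is the careful bookkeeping of Jordan-block polynomial prefactors in the upper bound, ensuring that the $(1+T^{m_j-1})$ terms contribute only $o(T)$ to $\log N$ uniformly in the other constants.  A secondary subtlety is the treatment of eigenvalues with $\mbox{re}(\l) = 0$: such directions contribute nothing to $H(A)$ because their purely polynomial solution growth is absorbed by the $1/T$ normalization, but this must be noted explicitly in the upper bound count, where they only require $O((1+T^{m_j-1})/\e)^{d_j}$ grid points.
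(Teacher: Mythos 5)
Your proposal is correct and takes essentially the same approach as the paper: a generalized-eigenspace (Jordan) decomposition with an adapted grid as the $(T,\e)$-spanning set for the upper bound, and a Lebesgue-volume argument via Liouville's trace formula on the unstable subspace for the lower bound. The only cosmetic difference is that you carry the polynomial prefactors $(1+T^{m_j-1})$ explicitly and let the $1/T$ normalization absorb them, whereas the paper's preparatory lemma first dominates $\sup_{s\in[0,t]}\|e^{As}\|$ by $e^{(\mu+\delta)t}$ and then counts.
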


We start with a lemma: \begin{lemma}\label{jordanlemma}
Let $A=\lambda I +N\in M_n(\C)$ be a matrix with $N$ nilpotent $(N^k=0$ some $k>0$),  and $\lambda=\mu+i\nu$.  Then for every $\delta>0$ there is a $T_\delta>0$ such that $$ e^{(\mu+\delta)t}>\displaystyle\sup_{s\in[0,t]}||e^{As}||$$ whenever $t>T_\delta$. \end{lemma}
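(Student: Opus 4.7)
The plan is to exploit the commutativity of $\lambda I$ and $N$, which gives the factorization $e^{As} = e^{\lambda s}\, e^{Ns}$. Because $N^k = 0$, the series defining $e^{Ns}$ truncates to $e^{Ns} = \sum_{j=0}^{k-1} \frac{s^j}{j!} N^j$, so $\|e^{Ns}\| \leq p(s) := \sum_{j=0}^{k-1} \frac{s^j}{j!} \|N\|^j$, a polynomial in $s$ of degree at most $k-1$. The scalar factor satisfies $|e^{\lambda s}| = e^{\mu s}$, yielding the pointwise bound $\|e^{As}\| \leq e^{\mu s} p(s)$ for all $s \geq 0$.

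Next, I would invoke the elementary fact that any exponential beats any polynomial: fix an auxiliary $\delta' \in (0,\delta)$ (say $\delta' := \delta/2$) and choose $S_\delta > 0$ so that $p(s) < e^{\delta' s}$ for every $s \geq S_\delta$. This upgrades the bound to $\|e^{As}\| < e^{(\mu+\delta')s}$ on $[S_\delta,\infty)$. On the complementary interval $[0,S_\delta]$, the continuous function $\|e^{As}\|$ attains some finite maximum $M_\delta$, a constant independent of $t$.

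To conclude, I would pick $T_\delta \geq S_\delta$ large enough that $e^{(\mu+\delta)t} > M_\delta$ and $e^{(\mu+\delta)t} > e^{(\mu+\delta')t}$ for all $t > T_\delta$; the former uses $e^{(\mu+\delta)t} \to \infty$ (the case of interest, $\mu + \delta > 0$), and the latter is immediate since $\delta' < \delta$. Splitting $[0,t]$ at $S_\delta$, every $s \in [0,t]$ then satisfies $\|e^{As}\| < e^{(\mu+\delta)t}$, giving the desired sup bound. The only delicate point is the passage from the pointwise estimate $e^{\mu s} p(s)$ to a uniform sup bound over $[0,t]$ with \emph{strict} inequality; introducing the auxiliary exponent $\delta' < \delta$ bridges this gap in one stroke, absorbing both the polynomial prefactor and the need for strictness.
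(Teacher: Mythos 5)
Your proof is correct and follows essentially the same route as the paper's: factor $e^{As}=e^{\lambda s}e^{Ns}$ by commutativity, bound the truncated nilpotent series by a polynomial in $s$, let a small exponential absorb it, and handle the initial compact segment via continuity before pushing $T_\delta$ far enough out. The only cosmetic difference is your auxiliary exponent $\delta'=\delta/2$ (the paper spends the whole $\delta$ on absorbing the polynomial), and both arguments rely on the same implicit restriction $\mu+\delta>0$ (without which the claim fails, since $\sup_{s\in[0,t]}\|e^{As}\|\geq\|I\|=1$), which you at least acknowledge explicitly.
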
 

\begin{proof} We first show that for $\d>0$ there is a $T_\d'>0$ such that $e^{(\m+\d)t}>||e^{At}||$ when $t>T_\d'$. As $[\lambda I,N]=0$, $$ e^{At}= e^{\lambda I t+Nt}=e^{\lambda I t}e^{Nt}=e^{\lambda It}p(Nt),$$ where $p\in \R[u]$ is some polynomial of degree $n-1$ (namely: $p(u) = 1+u+\ldots+\frac{u^{n-1}}{(n-1)!}$).  Taking norms, we have $$\begin{array}{ll} ||e^{At}|| & = || e^{\lambda It} p(Nt)|| \\
	 & = ||e^{\mu It}e^{i\nu It}p(Nt)|| \\
	  & \leq ||e^{\mu It}||\cdot || e^{i\nu I t}|| \cdot || p(Nt)|| \\
	   & = e^{\mu t} ||p(Nt)||.\end{array}$$ Applying the triangle inequality,  $||p(Nt)||\leq p(||N||t)$, and thus $$ ||e^{At}|| \leq e^{\mu t}p(||N||t),$$ and as for fixed polynomial $p$ and $\d>0$, $e^{\delta t}>p(||N||t)$  whenever $t>T_\d'$ some $T_\d'$ sufficiently large, so the first part follows.  
	   
	   For the statement of the lemma, $||e^{At}||$ is continuous and therefore takes a maximum value on compact set $[0,T_\d']$, call it $r=\dismax_{t\in[0,T_\d']}||e^{At}||$.   Now, $e^{(\m+\d)t}$ is increasing and unbounded on $(0,\infty)$, so there is some $T_\d\geq T_\d'$ for which $e^{(\m+\d)t}>r$ for $t>T_\d$; so when $t>t'>T_\d$, we have $e^{(\m+\d)t}>\max\{e^{(\m+\d)t'},r\} \geq \displaystyle\sup_{s\in[0,t']}||e^{At}||$.\end{proof}

Jordan form matrices canonically satisfy the conditions of this lemma.  The strategy in the following proof is to break up separation of solutions along chunks of Jordan blocks; on each Jordan block, we reduce the problem to the scalar case, and in the end we add up the results. 

\begin{proof}[Proof of Theorem] Let $K\subset \R^n$ be compact containing the origin.   Suppose without loss of generality---after coordinate transformation\citep[\S3.1.b]{katoksystems}---that $A$ is in Jordan canonical form $$ A = \begin{pmatrix} J_1 & \cdots & \textbf{0} \\ \vdots & \ddots& \vdots \\ \textbf{0} & \cdots & J_r\end{pmatrix},$$ with each $J_j\in M_{n_j}(\R)$, $n_j=\dim(J_j)$, for $j=1,\ldots,r$.  Expand $K$ to some cube $\tilde{K}=[\a_1^1,\b_1^1]\times \ldots\times[\a_{n_1}^1,\b_{n_1}^1]\times\ldots\times[\a_1^r,\b_1^r]\times\ldots\times[\a_{n_r}^r,\b_{n_r}^r]\supset K$ with $\b_i^j-\a_i^j=:\ell_i^j\in \N$ integer lengths for simplicity of computation.  

Fix $T, \e>0$; we first show that $h(A)\leq H(A)+n\d$ for any arbitrary $\d>0$, from which we will conclude  that $h(A)\leq H(A)$.  We construct explicitly a $(T,\e)$-spanning set $S_{T,\e}$ which satisfies $\dislimsup_{T\rightarrow\infty}\frac{1}{T}\log\#S_{T,\e}\leq H(A)+\d$.   To this end, we first compute separation of solutions.  For $x, \hx\in \R^n$, $$ \begin{array}{ll} ||\ph(\cdot,x)-\ph(\cdot,\hx)||_{[0,T]} & =\dissup_{t\in[0,T]}||e^{At}(x-\hx)||\\ & 
= \dissup_{t\in[0,T]}\max_{j=1,\ldots,r}||e^{J_jt}(x^j-\hx^j)||\\ &  \leq \dissup_{t\in[0,T]}\max_{j=1,\ldots,r}||e^{J_jt}||||x^j-\hx^j||,
\end{array}$$  where the middle equality holds because we are using the $\infty$-norm and breaking the matrix product along Jordan blocks; the last follows from submultiplicativity. The first equality follows from linearity of the system, and in the last expression $x^j\in \R^{n_j}$ denotes the subvector of $x$ corresponding to submatrix $J_j$ of $A$. 
 
 We now apply Lemma \ref{jordanlemma} with $\m_j$ the real part of Jordan block $J_j$ and some $T_\d>0$, $$\dissup_{s\in[0,t]}||e^{J_js}||<e^{(\m_j+\d)t}$$ whenever $t>T_\d$.  If $\m_j<0$ then $e^{J_jt}$ decays and so  $||e^{J_jt}||$ is bounded.  As we will be taking the limit supremum as $T\rightarrow\infty$, we can assume that that the preceding inequalities hold unqualifiedly.  Then $$ \begin{array}{ll} \dissup_{t\in[0,T]}\max_{j=1,\ldots,r}\max\{e^{(\m_j+\d)t},1\}||x^j-\hx^j||&  \leq \max_{j=1,\ldots,r}\dissup_{t\in[0,T]}\max\{e^{(\m_j+\d)t},1\}||x^j-\hx^j||\\ & \leq  \dismax_{j=1,\ldots,r}e^{\ha_{T,j}}||x-\hx||,\end{array}$$ with $\ha_{T,j}:=\max\{(\m_j+\d)T,0\}$. 
 
Now we can define $S_{T,\e}$: let $\hs_{T,j}:=\frac{1}{\e}e^{\ha_{T,j}}$,  $\hat{\e}_{T,j}=\hs_{T,j}^{-1}$, and create a grid along each coordinate direction: $$ S_{T,\e,i}^j:=\{\a_i^j+\hat{\e}_{T,j},\a_i^j+3\hat{\e}_{T,j}, \ldots,\b_i^j-\hat{\e}_{T,j}\}$$ with cardinality $\ell_i^j\hs_{T,j}/2$.  Define $S_{T,\e}\subset \tilde{K}$ to be the induced lattice $$S_{T,\e}:=S_{T,\e,1}^1\times\ldots \times S_{T,\e,n_1}^1\times\ldots\times S_{T,\e,1}^r\times\ldots\times S_{T,\e,n_r}^r,$$ which by construction is $(T,\e)$-spanning.

Counting points,  $$\#S_{T,\e}=\disp_{i=1}^r\disp_{j=1}^{n_i}\#S_{T,\e,i}^j=\disp_{j=1}^r\disp_{i=1}^{n_j}\ell_i^j\hs_{T,j},$$ and this quantity upper bounds the minimal cardinality $s(T,\e)$ of $(T,\e)$-spanning sets.  Applying the definition of entropy, we have \begin{equation}\label{eq:cardcount} h(A) :=\disl_{\e\rightarrow0}\dislimsup_{T\rightarrow\infty}\frac{1}{T}\log s(T,\e)\leq \disl_{\e\rightarrow0}\dislimsup_{T\rightarrow\infty}\frac{1}{T}\log\#S_{T,\e}.\end{equation} But \begin{equation}\label{eq:finallinear}\begin{array}{ll} \dfrac{1}{T}\#S_{T,\e} & = \dfrac{1}{T}\diss_{j=1}^r\diss_{i=1}^{n_j}\left(\ha_{T,j}+\log(\ell_i^j/2\e)\right)\\ & =\dfrac{1}{T}\left(\diss_{j=1}^rn_j\ha_{T,j}\right)+\dfrac{1}{T}\left(\diss_{j=1}^r\diss_{i=1}^{n_j}\log(\ell_i^j/2\e)\right)\\ & = \diss_{j=1}^rn_j(\m_j+\d)+\dfrac{1}{T}\left(\diss_{j=1}^r\diss_{i=1}^{n_j}\log(\ell_i^j/2\e)\right).\end{array}\end{equation}Taking the limit supremum $T\rightarrow\infty$ on both sides of \eqref{eq:finallinear} we obtain the upper bound $\dislimsup_{T\rightarrow\infty}\dfrac{1}{T}\#S_{T,\e}\leq \diss_{j=1}^rn_j(\m_j+\d)$.  Because $\d>0$ was arbitrary, we have $h(A)\leq \diss_{j=1}^rn_j\m_j=H(A)$, completing one direction.

In the other direction, we consider spanning sets as dependent on the set $K$ of initial conditions. Observe that for any subspace $E\subset \R^n$, the projection $\p_E(S_\e(T))$ of any $(T,\e,K)$-spanning set $S_\e(T)$ still $(T,\e,\p_E(K))$-spans the evolution of $K$ on its projection in the subspace. 

 Because each $\e$-ball in the $||\cdot||_\infty$-norm has finite volume $v_\e$, the volume of the flow $\m(\ph(T,K))$ must be upper bounded by the volumes of each $\e$-ball: $$\mu(\ph(T,K))<\#S_\e(T)v_\e,$$  where here  $\mu$ denotes the Lebesgue volume on $\R^n$. Similarly, for every subspace $E\subset \R^n$, we have $$\m_e\big(\p_E(\ph(T,K))\big)<\#S_\e(T)v_\e^e,$$ where $e=\dim(E)$, $\m_e$ denotes the $e$-dimensional Lebesgue measure and $v_\e^e$ the dimension-$e$ volume of an $\e$-ball.  

Now we choose $E$ to be the unstable subspace,  let $J_1,\ldots,J_\ell$ be the corresponding Jordan blocks with positive real part eigenvalues, and let $$ A^+:= \begin{pmatrix} J_1 & \cdots & 0\\  \vdots & \ddots & \vdots \\ 0 & \cdots & J_k\end{pmatrix}.$$ 

Then: $$\begin{array}{ll} \mu_{e}\big(\pi_E(\varphi(T,K))\big) &= \mu_{e}(e^{A^+T}\pi_E (K)) \\ & = \det(e^{A^+T})\mu_{e}(\pi_E (K)) \\ & =e^{\text{tr}(A^+T)}\mu_{e}(\pi_E (K))\\ & = e^{H(A)T}\mu_{e}(\pi_E (K)).\end{array}$$ The second inequality is simply volume transformation and the third is Liouville's trace formula, which is easily verifiable (c.f.\ \citep[Proposition 15.20]{tu}).  Taking logs and limits, we have: $$\dislimsup_{T\rightarrow\infty}\frac{1}{T}\log\#S_\e(T)\geq\dislimsup_{T\rightarrow\infty} \left(H(A)+\frac{\log\mu_{e}(\pi_E K)/v^e_\e}{T}\right)=H(A)$$ because $K$ is bounded.  As this inequality holds for arbitrary spanning set $S_\e(T)$, in particular it holds for a minimal (with respect to cardinality) spanning set, $$\dislimsup_{T\rightarrow\infty}\frac{1}{T}\log \left(s(T,\e)\right)\geq H(A).$$ Because $\e$ does not appear on the right-hand side, the inequality remains true taking the limit as $\e\rightarrow0$ and thus we obtain that  $h(A)\geq H(A)$, proving the other direction and completing the proof.   \end{proof}

\chapter{Nonscalar Switched Linear Systems}

\section{Introduction} 
Next we would like to turn our attention to a general linear switched system \begin{equation} \label{eq:vectorswitch} \dx = A_\s x\end{equation}  with $A_i\in M_n(\R)$ for $i=1,\ldots, k$, and fixed switching signal $\s:[0,\infty)\rightarrow\{1,\ldots,k\}$.  For a matrix $A$, recall that $tr(A)$ is the sum of all eigenvalues. In general, eigenvalue analysis provides little insight into state expansion, except when there is some Lie algebra structure among the matrices $A_1,\ldots, A_k$. We will momentarily state and prove an upper bound for the entropy of a switched system in which the Lie Algebra generated by $\{A_1,\ldots,A_k\}$ is solvable, in terms of the individual entropies $h(A_i)$.  First, we give a crude lower bound, independent of the switching signal $\s$ and structure on the Lie algebra. 

For simplicity of notation, we will  now once and for all assume $K$ is a unit hypercube, i.e.\,  $\m(K)=1$,  and projected onto any $l$-dimensional subspace spanned by coordinate vectors, the $\ell$-dimensional volume $\m_\ell(\p_{i_1,\ldots,i_{\ell}}(K))=1$.\footnote{Our motivation is that initial volume does not enter into entropy bounds, and we do not want to carry extraneous information (and cluttered notation) through computations.  Of course, this equality depends on first fixing the basis, but as we see from \citep[\S3.1.b]{katoksystems}, entropy is invariant to change-of-basis, and  \textit{a fortiori} to scaling along each coordinate.}

\section{General Lower Bound}
Recall that $\t_i(t)=\disg_0^t\chi_i(s)ds$ where $\chi_i$ is the indicator function which is nonzero only when mode $i$ is active, and that $\tf_i=\dislimsup_{t\rightarrow\infty} \frac{1}{t}\t_i(t)$.

\begin{prop}\label{prop:nolie} Fix $\s:[0,\infty)\rightarrow\{1,\ldots,k\}$ and consider switched linear system \eqref{eq:vectorswitch}. The entropy is lower bounded by $$ h(A_\s)\geq \diss_{i=1}^ktr(A_i)\tf_i.$$\end{prop}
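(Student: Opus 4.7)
The plan is to adapt the volume-based lower bound argument from the LTI case (Theorem~\ref{theorem:linear}) to the switched setting, replacing the single matrix exponential with the composition of matrix exponentials across switching intervals and applying Liouville's formula on each piece.

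First I would compute the $n$-dimensional Lebesgue volume of $\ph(T,K)$. Let $0=t_0<t_1<\ldots<t_m\leq T$ enumerate the switching times in $[0,T]$. On each subinterval $(t_j,t_{j+1})$ the flow is the linear map $e^{A_{\s(t_j)}(t_{j+1}-t_j)}$, whose Jacobian determinant is $e^{\text{tr}(A_{\s(t_j)})(t_{j+1}-t_j)}$ by Liouville's formula. Since $\ph(T,\cdot)$ is the composition of these maps and determinants are multiplicative, collecting terms by mode gives
\begin{equation*}
\det\big(\text{Jac}\,\ph(T,\cdot)\big)=\disp_{j}e^{\text{tr}(A_{\s(t_j)})(t_{j+1}-t_j)}=e^{\diss_{i=1}^k\text{tr}(A_i)\t_i(T)}.
\end{equation*}
Together with the normalization $\m(K)=1$, the change-of-variables formula yields $\m(\ph(T,K))=e^{\diss_i\text{tr}(A_i)\t_i(T)}$.

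Next I would run the covering argument used in Theorem~\ref{theorem:linear}: any $(T,\e)$-spanning set $S_\e(T)$ induces a cover of $\ph(T,K)$ by $\#S_\e(T)$ balls of $\infty$-radius $\e$ in $\R^n$, so
\begin{equation*}
\#S_\e(T)\cdot v_\e^n\geq \m(\ph(T,K))=e^{\diss_{i=1}^k\text{tr}(A_i)\t_i(T)},
\end{equation*}
where $v_\e^n$ is the volume of such an $\e$-ball. Taking logarithms, dividing by $T$, passing to $\dislimsup_{T\to\infty}$ and then sending $\e\to 0$, the term $-\log(v_\e^n)/T$ vanishes and we obtain
\begin{equation*}
h(A_\s)\geq \dislimsup_{T\to\infty}\frac{1}{T}\diss_{i=1}^k\text{tr}(A_i)\t_i(T).
\end{equation*}

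The remaining---and only real---obstacle is identifying the right-hand side with $\diss_i\text{tr}(A_i)\tf_i$ as stated in the proposition, since in general the $\dislimsup$ of a finite sum does not equal the sum of individual $\dislimsup$'s once the coefficients are allowed to change sign. I would resolve this following the convention already in force from the scalar case, where $\kf=\diss_i a_i\tf_i$ was defined to \emph{mean} $\dislimsup_{t\to\infty}\frac{1}{t}\k(t)$; under that reading the identification is definitional. When each individual ratio $\t_i(T)/T$ has a genuine limit $\tf_i$ (e.g.\ under the periodicity hypothesis of Proposition~\ref{prop:period}), the two expressions coincide literally, and even without that, a diagonal subsequence argument on a common $T_n\to\infty$ along which every $\t_i(T_n)/T_n$ converges yields at least the bound $\diss_i\text{tr}(A_i)\tilde{\t}_i$ for some $\tilde{\t}_i\leq \tf_i$, so the proposition follows in its intended sense.
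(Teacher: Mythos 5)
Your proposal is correct and follows essentially the same route as the paper: the volume of $\ph(T,K)$ is obtained by composing Liouville's formula across the switching intervals and collecting terms by mode, and the covering bound $\#S_\e(T)\,v_\e\geq \m(\ph(T,K))$ then yields $h(A_\s)\geq \dislimsup_{T\rightarrow\infty}\frac{1}{T}\diss_{i=1}^k\mathrm{tr}(A_i)\t_i(T)$ exactly as in the paper's argument. The one point where you go beyond the paper is in flagging the final identification: the paper simply asserts $\dislimsup_{t\rightarrow\infty}\diss_{i=1}^k\frac{1}{t}\mathrm{tr}(A_i)\t_i(t)=\diss_{i=1}^k\mathrm{tr}(A_i)\tf_i$, which is false in general (take two modes with $\mathrm{tr}(A_1)=\mathrm{tr}(A_2)=1$ and $\t_1(t)/t$ oscillating between near $0$ and near $1$, so the left side is $1$ while the right side approaches $2$, and the literal bound would even fail for $A_1=A_2$), so your reading of the right-hand side as notation for the $\limsup$ of the sum---consistent with the scalar chapter's definition of $\kf$---is the correct way to make the statement and the volume argument agree, and your subsequence fallback correctly describes what the argument actually delivers when the individual limits do not exist.
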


\begin{proof} 
Recall from the lower bound argument in the proof of Theorem \ref{theorem:linear} that there is a uniform bound $v_\e$ on the ratio of volume growth of $K$ along solutions and the number of points in a $(T,\e)$-spanning  set $S_\e(T)$, given by $\mu(\ph(T,K))<v_\e\#S_\e(T),$ and hence $$\#S_\e(T)\geq \frac{\mu(\ph(T,K))}{v_\e}.$$

Now suppose there are $N(t)-1$ switches on the interval $[0,t]$.  Then a general solution of $\dx = A_\sigma x$ is given by $\varphi(t,x)=\disp_{i=1}^{N(t)}e^{A_{j_i}T_i}x$, where $\diss_{i=1}^{N(t)}T_i=t$ and  $\diss_{i=1}^{N(T)} T_i\chi_{\ell}(i)=\tau_\ell(t)$, with $\x_\ell(i)$ indicating whether $j_i=\ell$.  

Then: $$\begin{array}{ll} \mu(\ph(T,K)) & =  \mu\left(\disp_{i=1}^{N(t)}e^{A_{j_i}T_i}K\right) \\ & = \mu\left(e^{A_{j_{N(t)}}T_{N(t)}}(\disp_{i=1}^{N(t)-1}e^{A_{j_i}T_i}K)\right)\\
& = \det(e^{A_{j_{N(t)}}T_{N(t)}})\mu\left(\disp_{i=1}^{N(t)-1}e^{A_{j_i}T_i}K\right)\\
& \;\;\;\;\;\;\;\;\;\;\;\;\;\;\;\;\;\;\vdots\\
& =\left(\disp_{i=1}^{N(t)}\det(e^{A_{j_i}T_i})\right)\mu(K)\\ \end{array}$$ 
$$
\begin{array}{ll}

& = \left(\disp_{i=1}^{N(t)}\det(e^{A_{j_i}T_i})\right)\\
& = \disp_{i=1}^k\det(e^{A_i\tau_i(t)})\\ 
& = \disp_{i=1}^ke^{\text{tr}(A_i)\tau_i(t)}\\
& = e^{\sum_{i=1}^ktr(A_i)\t_i(t)}.\end{array}$$

The individual steps in this volume argument are identical to the analogous computation in Theorem \ref{theorem:linear}, but here we are using the fact that even if matrices do not commute, their determinants do. 

For each $i=1,\ldots,k$,  $\dislimsup_{t\rightarrow\infty} \frac{1}{t}\t_i(t)=\tf_i$, and so $$ \dislimsup_{t\rightarrow\infty}\diss_{i=1}^k\frac{1}{t}tr(A_i)\t_i(t)=\diss_{i=1}^ktr(A_i)\tf_i.$$  Then for each $\d>0$ there is a $T_\d$ and sequence $\Tc=\{t_j\}_{j\in\N}\subset \R$, with $t_j\xrightarrow{j\rightarrow\infty}\infty$ such that $$e^{\sum_{i=1}^ktr(A_i)\t_i(t_j)}\geq e^{\sum_{i=1}^ktr(A_i)(\tf_i-\d)t_j}$$ for  all $t_j\in \Tc\cap \{t>T_\d\}$.  

Then $$\dislimsup_{T\rightarrow\infty}\frac{1}{T}\log\# S_\e(T)\geq \dislimsup_{T\rightarrow\infty}\big(\diss_{i=1}^ktr(A_i)(\tf_i-\d)+\frac{\log(\m(K)/v_\e)}{T}\big)=\diss_{i=1}^ktr(A_i)(\tf_i-\d).$$ As $\d>0$ is arbitrarily, this shows that $h(A_\s)\geq \diss_{i=1}^ktr(A_i)\tf_i$, as claimed.

\end{proof}

\section{Simultaneous Diagonalizability Case}

Eigenvalue analysis is for the most part irrelevant in  computations of entropy for switched (or more generally: time-varying)  linear systems, as the culprit of instability is mixing in the geometry of eigenspaces. Given sufficient structure on the Lie algebra generated by the matrices representing the dynamics, eigenvalue analysis can provide useful information about system instability and expansion. This is true in the case of individually stable systems \citep[Theorem 2.7]{liberzonswitch} for stability analysis, and it is true too in the case of entropy. 

We briefly mention a nonscalar switched linear case in which entropy bounds can be computed in a way similar to the scalar case.  Suppose that the matrices $\{A_1,\ldots,A_k\}$ pairwise commute and are diagonalizable (hence: they are simultaneously diagonalizable \citep[\S1.3]{horn-johnson-book}.   Thus, given matrices $A_1,\ldots,A_k$, there exists some similarity transform $P\in Gl_n(\R)$ which brings $PA_iP^{-1}$ into a matrix in diagonal form, for all $i=1,\ldots,k$. As entropy is invariant to such transformation (again \citep[\S3.1.b]{katoksystems}), we treat the matrices $A_1,\ldots,A_k$ as being already in diagonal (and in the next section, triangular) form. 

Let $(a_i^j)$ denote the $i$-th diagonal element of matrix $A_j$: $$A_j=\begin{pmatrix} a_1^j & \cdots & 0\\ \vdots  & \ddots & \vdots \\ 0 & \cdots & a_n^j\end{pmatrix}. $$ We fix notation which we will use in the following results; define $$\k_i(t):=\diss_{j=1}^ka_i^j\t_j(t),$$ and $$\kf_i:=\diss_{j=1}^ka_i^j\tf_j,$$ where $\tf_j=\dislimsup_{t\rightarrow\infty}\frac{1}{t}\t_j(t)$. We have defined these quantities before in the scalar case, and here we have an analogous scalar definition, treating each state dimension separately. 

\begin{prop}\label{prop:simdiag} Fix switching signal $\s:[0,\infty)\rightarrow\{1,\ldots,k\}$ and consider switched linear system \eqref{eq:vectorswitch} where each $A_1,\ldots,A_k$ pairwise commute and are diagonalizable.  Then the entropy of this system is given by $$\dismax_{i=1,\ldots,k}\kf_i^+\leq h(A_\s)\leq  \diss_{i=1}^k\kf_i^+.$$	
\end{prop}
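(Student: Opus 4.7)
The plan is to exploit simultaneous diagonalization to decouple the system into $n$ independent scalar switched systems, and then reuse Proposition \ref{prop:scalin} coordinate-wise for the upper bound and along a single coordinate for the lower bound.

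Since $A_1,\ldots,A_k$ pairwise commute and are diagonalizable, we may assume (after the basis change, which preserves entropy) that each $A_j$ is diagonal with entries $a_1^j,\ldots,a_n^j$. The matrix exponentials commute, so the solution decouples: writing $x=(x^1,\ldots,x^n)$, each coordinate evolves as the scalar switched system $\dot{x}^i=a_i^{\sigma(t)}x^i$, whose flow is $\varphi_i(t,x_0^i)=e^{\k_i(t)}x_0^i$ with $\k_i(t)=\sum_{j=1}^k a_i^j\t_j(t)$. Because we use the $\infty$-norm, $\|\varphi(t,x)-\varphi(t,\hx)\|=\max_i|\varphi_i(t,x^i)-\varphi_i(t,\hx^i)|$, so the full $(T,\e)$-spanning condition is equivalent to $(T,\e)$-spanning in each coordinate separately.

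For the upper bound, fix $\d>0$. On each coordinate $i$, the scalar-case construction from the proof of Proposition \ref{prop:scalin} produces a $(T,\e)$-spanning set of cardinality at most $\frac{\ell_i}{2\e}e^{\ha_{T,i}}$ with $\ha_{T,i}:=\max\{a_{\max,i}T_\d,(\kf_i+\d)T\}$, where $a_{\max,i}:=\max\{a_i^1,\ldots,a_i^k,0\}$. Taking the product lattice of these $n$ one-dimensional grids gives a $(T,\e)$-spanning set $S_{T,\e}$ for the full system (by the coordinate-wise equivalence above), so
$$\log\#S_{T,\e}=\sum_{i=1}^n\left(\log(\ell_i/2\e)+\ha_{T,i}\right).$$
Dividing by $T$, taking $\limsup_{T\to\infty}$ and then $\e\to 0$, the $\log(\ell_i/2\e)/T$ terms vanish, the $T_\d$ contributions vanish, and we obtain $h(A_\s)\leq\sum_{i=1}^n(\kf_i+\d)^+$. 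Letting $\d\to 0$ yields $h(A_\s)\leq\sum_{i=1}^n\kf_i^+$, where coordinates with $\kf_i\leq 0$ contribute $0$ (a single point suffices asymptotically, just as in the $\kf\leq 0$ case of Proposition \ref{prop:scalin}).

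For the lower bound, pick $i^*$ achieving $\max_i\kf_i^+$; we may assume $\kf_{i^*}>0$ (otherwise the bound is trivial since $h\geq 0$). Given any $(T,\e)$-spanning set $S_\e(T)$ for the full system, its projection $\pi_{i^*}(S_\e(T))$ onto the $i^*$-th coordinate is a $(T,\e)$-spanning set for the scalar switched system $\dot x^{i^*}=a_{i^*}^{\sigma(t)}x^{i^*}$ restricted to $\pi_{i^*}(K)$, by the decoupling and the fact that projection is $1$-Lipschitz in the $\infty$-norm. Applying the scalar measure argument from Proposition \ref{prop:scalin} (the projected set of initial conditions is an interval of positive length, and its image under $e^{\k_{i^*}(T)}$ has length $\ell_{i^*}e^{\k_{i^*}(T)}$, which must be covered by at most $\#S_\e(T)$ intervals of length $2\e$) yields $\#S_\e(T)\geq\frac{\ell_{i^*}}{2\e}e^{\k_{i^*}(T)}$. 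Taking logs, dividing by $T$, passing to a sequence $\{t_j\}$ along which $\k_{i^*}(t_j)/t_j\to\kf_{i^*}$, and then $\e\to 0$, we get $h(A_\s)\geq\kf_{i^*}=\max_i\kf_i^+$.

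The main obstacle is entirely bookkeeping: keeping track of the $^+$ truncations for coordinates with nonpositive $\kf_i$, and justifying that the infinity norm genuinely lets us assemble and disassemble the spanning/measure arguments coordinate by coordinate. Both are routine once the diagonal decoupling is in place, so no new ideas beyond Proposition \ref{prop:scalin} are needed.
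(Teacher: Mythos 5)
Your proposal is correct and follows essentially the same route as the paper's proof: diagonal decoupling into scalar switched systems, a product grid of coordinate-wise spanning sets for the upper bound, and a projection onto a single (worst) coordinate combined with the one-dimensional measure/covering argument for the lower bound. The only cosmetic differences are your use of per-coordinate constants $a_{\max,i}$ in place of a single $a_{\max}$ and your phrasing of the lower bound via projecting the spanning set rather than projecting the flow, which amount to the same computation.
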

\begin{proof}   First we show that the expression on the right-hand side is an upper bound by computing separation of solutions starting from different initial conditions, then we construct a $(T,\e)$-spanning set, and count.  To show the lower bound, we use the volume argument. 

Notice that a solution $\ph(t,x)=e^{\int_0^tA_{\s(s)}ds}x$ is given by componentwise scalar solutions in each diagonal entry: $$\ph(t,x) = \begin{pmatrix} e^{\k_1(t)} & \cdots & 0 \\ \vdots & \ddots & \vdots \\ 0 & \cdots & e^{\k_n(t)}\end{pmatrix} \begin{pmatrix} x_1\\ \vdots \\ x_n\end{pmatrix},$$  where $x_i$ denotes the $i$-th entry of the vector $x\in\R^n$.  Separation of solutions on finite time horizon $[0,T]$ is given by \begin{equation}\label{eq:diagswitch}  \begin{array}{ll}||\ph(\cdot,x)-\ph(\cdot ,\hx)||_{[0,T]} & = \displaystyle\sup_{t\in[0,T]}||\ph(t,x)-\ph(t,\hx)||\\ & = \displaystyle\max_{i=1,\ldots,n}e^{\k_i(t)}|x_i-\hx_i|. \end{array}\end{equation} At this point, we combine the results from the scalar switched case in Proposition \ref{prop:scalin} and the linear time-invariant nonscalar case in Theorem \ref{theorem:linear}.  Set $a_{\max}:=\max\{|a_i^j|:\,i=1,\ldots,n,\,j=1,\ldots,k\}$, and by definition of $\k_i$ for any $\d>0$ there is $T_\d>0$ such that $$\frac{1}{t}\k_i(t)<\kf_i+\d$$ whenever $t>T_\d$, for each $i=1,\ldots,n$.  Set $\ha_{T,i}:=\max\{a_{\max}T_\d,(k_i+\d)T\}$ so that $$||\ph(\cdot,x)-\ph(\cdot,\hx)||_{[0,T]} = \dismax_{i=1,\ldots,n}e^{\k_i(t)}|x_i-\hx_i|\leq \dismax_{i=1\ldots,n}e^{\ha_{T,i}}|x_i-\hx_i|.$$ 

 We construct a $(T,\e)$-spanning set as follows.  Define $\hs_{T,i}:=\frac{1}{\e}e^{\ha_{T,i}}$ and $\he_{T,i}:=\hs_{T,i}^{-1}$.  We start with  grid $S_{T,\e,i}$ containing $\hs_{T,i}$ evenly spaced points separated by distance $\he_{T,i}$ and define $$S_{T,\e}=\disp_{i=1}^nS_{T,\e,i},$$ which is by construction $(T,\e)$-spanning.  Then $$\log(\# S_{T,\e}) =\log\left(\disp_{i=1}^n\frac{1}{\e}e^{\ha_{T,i}}\right) = \log(1/\e^n)+\diss_{i=1}^n\ha_{T,i}.$$  As this spanning set provides an upper bound on the cardinality of a minimal $(T,\e)$-spanning set, we compute upper bound for entropy as $$ \begin{array}{ll} h(A_\s) & =\dislim_{\e\rightarrow0}\dislimsup_{T\rightarrow\infty} \frac{1}{T}\log(s(T,\e)) \\ & \leq \dislim_{\e\rightarrow0}\dislimsup_{T\rightarrow\infty}\frac{1}{T}\big(\log(1/\e^n)+\diss_{i=1}^n\ha_{T,i}\big)\\ & = n\d+\diss_{i=1}^n\max\{\kf_i,0\}.\end{array}$$ Indeed, the last equality follows from the fact that whenever $\kf_i\leq 0$, $\ha_{T,i}=a_{\max}T_\d$ and $\disl_{T\rightarrow\infty}\frac{a_{\max}T_\d}{T}=0$.  As $\d>0$ is arbitrary, we obtain the desired upper bound $\diss_{i=1}^n\max\{\kf_i,0\}$.  
 
 The lower bound follows by applying projecting lower bound argument on each component.  Namely, the volume  of $\p_i(\ph(T,K))$---the projection of $\ph(T,K)$ onto the $i$-th component---is bounded above by $2\e \#S_{T,\e,i}$. For each $i=1,\ldots,k$ there is a sequence $\{t_j^i\}_{j\in\N}$, with $t_j^i\xrightarrow{j\rightarrow\infty}\infty$, for which $e^{\k_i(t_j^i)}\geq e^{(\kf_i-\d)t_j^i}$ for all $t_j^i$.   Of course, $\#S_{T,\e}\geq \#S_{T,\e,i}$, so $$\begin{array}{ll}\dislimsup_{T\rightarrow\infty}\frac{1}{T}\log\#S_{T,\e} & \geq \dislimsup_{T\rightarrow\infty}\frac{1}{T}\log\#S_{T,\e,i}\\ & \geq \dislimsup_{j\rightarrow\infty}\big(\frac{1}{t_j^i}\k_i(t_i) -\log(1/2\e)\big)\\ & \geq \dislimsup_{j\rightarrow\infty}(\kf_i-\d)\\ & = \kf_i-\d,\end{array}$$ which proves, since $i$ and $\d$ were arbitrary,  that $\dismax_{i=1,\ldots,k}\kf_i\leq h(A_\s)$, as desired.\end{proof}

We point out a recurring phenomenon  with entropy of switched systems: though given enough structure it is possible to construct bounds using eigenvalues, those bounds depend on how the invariant subspaces of each respective system line up.  Consider the following example. \begin{example}

 $$\begin{array}{ll} \dx &= \begin{pmatrix} 2 & 0 \\  0 & 0\end{pmatrix} \chi_1(\s(t)) x + \begin{pmatrix} 2 & 0 \\ 0 & -1 \end{pmatrix} \chi_2(\s(t))x\\ \dx & = \begin{pmatrix} 2 & 0 \\ 0& 0 \end{pmatrix}\chi_1(\s(t)) x + \begin{pmatrix} 
	-1 & 0 \\ 0 & 2\end{pmatrix}\chi_2(\s(t))x 
\end{array}$$ both of whose systems have individual dynamics with equal entropies (namely 2 and 1, from Theorem \ref{theorem:linear}).  Let $$\s(t):=\left\{\begin{array}{lll}1 & \mbox{when} & \lfloor t\rfloor = 1\,\mbox{mod}\,2\\ 2 & \mbox{else} & \end{array}\right.$$ so that $\tf_1=\tf_2=1/2$. Applying the proposition, the entropy  for the first system $A_\s^1$ is bounded as $$\begin{array}{ll}\max_i\kf_i^+& =\max\{\frac{1}{2}\cdot 2+\frac{1}{2}\cdot 2, \frac{1}{2}\cdot 0+\max\{0,\frac{1}{2}\cdot (-1)\}\} \\ & \leq h(A_\s^1)\\ &  \leq \frac{1}{2}\cdot 2+\frac{1}{2}\cdot 2+ \frac{1}{2}\cdot 0+\max\{0,\frac{1}{2}\cdot (-1)\},\end{array} $$ and so is equality $h(A_\s^1) = 2$.  The entropy for $A_\s^2$, on the other hand, is bounded as $$\max\{1/2,1\} \leq h(A_\s^2) \leq 1/2+1,$$ and in particular $h(A_\s^1)\neq h(A_\s^2)$. 
\end{example}

\section{Simultaneous Triangularizability Case}
We now turn our attention to the switched linear case for which there exists a transformation which simultaneously takes each matrix to one in the form of an upper triangular matrix. Since entropy is invariant to change-of-basis, we treat simultaneous triangularizability as already simultaneously triangularized. In terms of a guiding thread in our investigation of switched systems, this condition is equivalent to a certain property of the Lie algebra generated by these matrices.  At a high level, the Lie algebra encodes information about ``how well'' matrices commute: if  iterated commutators on the generators of the Lie algebra eventually vanish, the Lie algebra is said to be solvable. A set of commuting matrices is solvable because the commutator vanishes at one application.  In general, solvability of the Lie algebra is equivalent to the existence of a simultaneous triangularizing transformation \citep[\S S5.5]{serrelie}. 

To gain insight into why simultaneous triangularizability might be helpful, consider that the dynamics in the last (n-th) entry are totally decoupled from the rest, and each successive component is decoupled from the ones which precede it.  Thus we can consider each system as a scalar linear system and step-by-step solve each system using solutions from one component as input to the dynamics of the components before.  

\begin{prop}\label{prop:solvableswitch} Consider switched system \eqref{eq:vectorswitch} and suppose  that the set of matrices $A_1,\ldots, A_k$ are upper triangular, with $$ A_j= \begin{pmatrix} a_1^j & \cdots & *\\
	\vdots & \ddots & \vdots \\0 & \cdots & a_n^j\end{pmatrix}. $$ Define $N(T):=\#\{t\in[0,T]:\,\disl_{s\nearrow t} \s(s) \neq \disl_{s\searrow t}\s(s)\}$ the number of switching instances on time horizon $[0,T]$, including endpoints $0$ and $T$, and suppose that $\disl_{T\rightarrow\infty} \frac{\log N(T)}{T}=0$.  Let $\kf_i=\diss_{j=1}^k\tf_ja_i^j$ as usual. Then entropy is bounded above by 
	$$ h(A_\s) \leq  \diss_{i=1}^{n}(\kf_1^++\kf_2^++\ldots +\kf_i^+)= n\kf_1^++(n-1)\kf_2^++\ldots+\kf_n^+.$$
\end{prop}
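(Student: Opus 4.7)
My plan is to construct a $(T,\e)$-spanning set by gridding each initial-coordinate direction with its own spacing $\d_j$, as in the proofs of Propositions~\ref{prop:scalin} and~\ref{prop:simdiag}, and then to count the resulting lattice. Simultaneous upper-triangularity forces the state-transition matrix $\Phi(t)$ to be upper triangular with $\Phi(t)_{ii}=e^{\k_i(t)}$ and zeros below the diagonal. The off-diagonal entry $\Phi(t)_{ij}$ for $i<j$ encodes how the initial condition $x_j(0)$ contaminates the coordinate $x_i(t)$ through the upper-triangular coupling, so the spacing in direction $j$ must tame $M_j(T):=\dismax_{i\leq j,\,t\leq T}|\Phi(t)_{ij}|$. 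Taking $\d_j:=\e/(nM_j(T))$ yields a $(T,\e)$-spanning product lattice of cardinality $\disp_{j=1}^n nM_j(T)/\e$, which reduces the problem to showing, for each $j$,
$$\dislimsup_{T\rightarrow\infty}\frac{1}{T}\log M_j(T)\leq \kf_1^++\kf_2^++\cdots+\kf_j^+,$$
because summing over $j$ produces $\diss_{j=1}^n(n+1-j)\kf_j^+$, which is exactly the claimed entropy bound.

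I would establish this off-diagonal estimate by induction on $d=j-i\geq 0$. The base case $d=0$ is the scalar bound $|\Phi(t)_{ii}|=e^{\k_i(t)}\leq Ce^{(\kf_i^++\d)t}$ from Proposition~\ref{prop:scalin}. For the inductive step, Duhamel's formula applied to $\dot\Phi_{ij}=a_i^{\s(t)}\Phi_{ij}+\diss_{k=i+1}^{j}(A_\s)_{ik}\Phi_{kj}$ with $\Phi_{ij}(0)=0$ gives
$$\Phi(t)_{ij}=\disg_0^t e^{\k_i(t)-\k_i(s)}\diss_{k=i+1}^{j}(A_\s)_{ik}(s)\,\Phi(s)_{kj}\,ds.$$
Feeding the inductive hypothesis into each $\Phi_{kj}(s)$ for $k>i$, the dominant exponential rate comes from $k=i+1$, so the integrand is controlled by a constant multiple of $e^{\k_i(t)-\k_i(s)+(\kf_{i+1}^++\cdots+\kf_j^++\d)s}$. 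Evaluating the exponent $\k_i(t)-\k_i(s)+(\kf_{i+1}^++\cdots+\kf_j^+)s$ at the endpoints gives $\k_i(t)$ at $s=0$ (asymptotic rate at most $\kf_i^+$) and $(\kf_{i+1}^++\cdots+\kf_j^+)t$ at $s=t$; both are dominated by $\kf_i^++\kf_{i+1}^++\cdots+\kf_j^+$, and controlling the supremum over all $s\in[0,t]$ then closes the induction.

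The main obstacle lies in this last step: bounding $\dismax_{s\in[0,t]}(\k_i(t)-\k_i(s)+(\kf_{i+1}^++\cdots+\kf_j^+)s)$ by $(\kf_i^++\kf_{i+1}^++\cdots+\kf_j^++o(1))t$. The $\limsup$ defining $\kf_i$ only gives the upper bound $\k_i(t)\leq(\kf_i+\d)t$; it does not supply a matching lower control on $\k_i(s)$, so the term $-\k_i(s)$ inside the sup cannot be bounded tightly using $\kf_i$ alone. A naive estimate via $\disg_0^t(\kf_{i+1}^++\cdots+\kf_j^+-a_i^{\s(r)})^+\,dr$ introduces $\diss_m(a_i^m)^+\tf_m$, which can strictly exceed $\kf_i^+$ and would spoil the induction. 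The delicate point is that the running supremum of the antiderivative is typically much smaller than the antiderivative of the positive part, because contributions from negative-slope phases of $\k_i$ cancel those from positive-slope phases; this cancellation is where the slowly-switching hypothesis $\disl_{T\rightarrow\infty}\log N(T)/T=0$ must enter, ruling out pathological switching patterns that could produce large uncanceled excursions on the scale of $T$ and also absorbing the polynomial-in-$N(T)$ prefactors that appear when $\Phi(T)$ is unwound as a product over switching intervals. Once the inductive estimate is in hand, the spanning-set count proceeds exactly as in Proposition~\ref{prop:simdiag}, and sending $\e\rightarrow 0$ followed by $\d\rightarrow 0$ delivers the claimed upper bound.
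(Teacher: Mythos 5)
Your architecture is essentially the paper's: peel the coordinates off from the bottom up, express each coordinate by variation of constants with the later coordinates as inputs (your Duhamel formula for $\Phi_{ij}$ is just the transition-matrix form of the paper's formula for $\ph_{n-i}$), bound the growth in direction $j$ by $e^{(\kf_1^++\cdots+\kf_j^+)T}$ times a subexponential prefactor, and count a product grid with spacing $\e/(nM_j(T))$. Your reduction of the proposition to the estimate $\limsup_{T\rightarrow\infty}\frac{1}{T}\log M_j(T)\leq \kf_1^++\cdots+\kf_j^+$ and the final count $\sum_{j}(n+1-j)\kf_j^+$ are correct. But the proposal is incomplete, and you say so yourself: the inductive estimate --- the entire analytic content of the proposition --- is never established. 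Announcing that the hypothesis $\log N(T)/T\rightarrow 0$ ``must enter'' at the delicate step is not a proof of that step, so this is a genuine gap.

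For comparison, the paper closes (or at least addresses) exactly this step with Lemma \ref{lemma:bound}: it integrates $e^{\k_i(s)}$ exactly over each switching interval, which collapses $\int_0^t e^{\k_i(s)}\,ds$ to a sum of $N(t)$ boundary terms $e^{\k_i(t_\ell)}$ weighted by the constants $1/a_i^{\s(t_\ell)}$; each term is bounded by $e^{(\kf_i^++\d)t}$ because $\sup_{s\in[0,t]}\k_i(s)\leq(\kf_i^++\d)t$ for large $t$ (the strictly positive exponent $\kf_i^++\d$ absorbs the initial segment), and the prefactor $cN(T)$ is then harmless by the subexponential-switching hypothesis. A second difference: the paper does not carry the $s$-dependence of the input inside the exponent as you do; it first replaces $|\ph_k(s,x)|$ by its supremum over $[0,T]$, a constant of the form $p(N(T))e^{(\kf_k^++\cdots+\kf_j^++q(\d)\d)T}$, and only then bounds $e^{\k_i(t)}\int_0^t e^{-\k_i(s)}\,ds$, which sidesteps your $\max_s$ computation. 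That said, the difficulty you isolate is real and does not entirely disappear in the paper's route either: what must ultimately be controlled is $\sup_{s\leq t}\bigl(\k_i(t)-\k_i(s)\bigr)$, and the $\limsup$ defining $\kf_i$ gives no upper control on $-\k_i(s)$; the paper's bound $\int_0^t e^{-\k_i(s)}\,ds\leq cp(N(T))e^{\d t}$ tacitly uses $\liminf_{t\rightarrow\infty}\frac{1}{t}\k_i(t)\geq 0$. Subexponential $N(T)$ alone does not forbid order-$t$ excursions of $\k_i(t)-\k_i(s)$ (two switches per dyadic block suffice to produce them), so if you complete your argument you should either prove the needed lower control on $\k_i$ or state it as an additional hypothesis, rather than hoping the $N(T)$ assumption supplies the cancellation.
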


The mild assumption on $N(T)$ guarantees that the switching rate is subexponential. It is certainly weaker than average dwell time \citep[\S3.2]{liberzonswitch}, and permits increasingly frequent switches, with a generous upper bound on the rate of frequency increase.

\begin{lemma}\label{lemma:bound} Let $\d>0$ be arbitrary.  Then for each $i=1,\ldots,n$, there is $T_\d>0$ and constant $c>0$ such that $$\disg_0^te^{\k_i(s)}ds\leq cN(T)e^{(\kf_i^++\d)t}$$ whenever $T>T_\d$, where $N(T)$ denotes as defined above the number of switches on $[0,T]$. In particular, when $\kf^+_i<0$, the upper bound is simply $cN(T)e^{\d t}$. \end{lemma}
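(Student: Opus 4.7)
The plan is to partition $[0,T]$ along the switching instants $0=t_0<t_1<\cdots<t_{N(T)}=T$ and exploit the fact that on each piece the active subsystem is fixed, making $\kappa_i$ affine and the inner integral elementary. On a subinterval $[t_\ell,t_{\ell+1}]$ with active mode $j_\ell$, we have $\kappa_i(s)=\kappa_i(t_\ell)+a_i^{j_\ell}(s-t_\ell)$, so
$$\int_{t_\ell}^{t_{\ell+1}}e^{\kappa_i(s)}\,ds = e^{\kappa_i(t_\ell)}\int_0^{L_\ell}e^{a_i^{j_\ell}u}\,du, \qquad L_\ell:=t_{\ell+1}-t_\ell.$$
The inner integral evaluates explicitly to $(e^{a_i^{j_\ell}L_\ell}-1)/a_i^{j_\ell}$ (or $L_\ell$ when $a_i^{j_\ell}=0$), which I would bound by $C\,\max\{1,e^{a_i^{j_\ell}L_\ell}\}$ where $C:=1/a_{\min}$ and $a_{\min}:=\min\{|a_i^j|:a_i^j\neq 0\}$. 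Multiplying by $e^{\kappa_i(t_\ell)}$ converts the $\max\{1,e^{a_i^{j_\ell}L_\ell}\}$ into $\max\{e^{\kappa_i(t_\ell)},e^{\kappa_i(t_{\ell+1})}\}$, i.e.\ $e^{M_\ell}$ where $M_\ell:=\sup_{[t_\ell,t_{\ell+1}]}\kappa_i$.

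Next, I would uniformly bound $M_\ell$ using the \emph{limsup} definition of $\bar\kappa_i$. Given $\delta>0$, choose $T_\delta$ so that $\kappa_i(t)/t<\bar\kappa_i+\delta$ for $t>T_\delta$; set $a_{\max}:=\max_{i,j}|a_i^j|$, so that trivially $\kappa_i(t)\le a_{\max}T_\delta$ for $t\le T_\delta$. Combining the two cases yields
$$\sup_{s\in[0,T]}\kappa_i(s)\;\le\;a_{\max}T_\delta+(\bar\kappa_i^++\delta)T,$$
where the $^+$ comes from the observation that if $\bar\kappa_i\le 0$ then the bound $\kappa_i(t)<(\bar\kappa_i+\delta)t\le\delta t$ is what is active for $t>T_\delta$. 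Summing the per-piece estimates over the $N(T)$ subintervals now gives
$$\int_0^T e^{\kappa_i(s)}\,ds \;\le\; C\cdot N(T)\cdot e^{a_{\max}T_\delta}\cdot e^{(\bar\kappa_i^++\delta)T},$$
which is the claim with $c:=Ce^{a_{\max}T_\delta}$. The ``in particular'' statement (which should read $\bar\kappa_i<0$, making $\bar\kappa_i^+=0$) then follows by inspection.

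The main obstacle is the degenerate case where $a_i^{j_\ell}=0$ on some subinterval: then the inner integral is simply $L_\ell$, not the length-independent bound $1/|a_i^{j_\ell}|$. I would handle this by observing that on such an interval $\kappa_i$ is \emph{constant}, so its contribution to $\int_0^Te^{\kappa_i(s)}\,ds$ is at most $L_\ell e^{\kappa_i^\ast}$; summing these zero-slope contributions gives at most $T\,e^{\kappa_i^\ast}$, which can be absorbed either by noting that $T\le e^{\delta T}$ for large $T$ (so $T$ is subexponential) and enlarging $\delta$ to $\delta'=\delta+\epsilon$, or by lumping it into the $N(T)$ factor under the standing assumption $\log N(T)/T\to 0$. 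Either way the final inequality retains the stated form $cN(T)e^{(\bar\kappa_i^++\delta)t}$ at the cost of a harmless relabeling of $\delta$.
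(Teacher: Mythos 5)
Your proof is correct and follows essentially the same route as the paper's: partition $[0,T]$ at the switching instants, evaluate the elementary exponential integral on each affine piece, bound each piece's contribution by a constant times $e^{\sup_{[0,T]}\kappa_i}$, and control that supremum via the limsup definition of $\bar{\kappa}_i$ (splitting at $T_\delta$) to arrive at $cN(T)e^{(\bar{\kappa}_i^+ +\delta)t}$. The only substantive difference is that the paper reorganizes the resulting sum by an Abel-summation step into $c\sum_{\ell}e^{\kappa_i(t_\ell)}$ with $c=\max\{|1/a^\alpha|+|1/a^\beta|\}$, whereas you bound each piece directly by $e^{M_\ell}/a_{\min}$; your variant has the added merit of explicitly handling the degenerate case $a_i^{j_\ell}=0$ (and of noting that the ``in particular'' clause should read $\bar{\kappa}_i<0$), both of which the paper's own proof passes over.
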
 
\begin{proof} First fix $t>0$, and partition $[0,t]$ as $\Pc_t:=\{t_1,\ldots,t_N\}$, the set of switching times with $t_1=0$ and $t_N=t$, with subscript $N=N(t)$.  Finally, let $a_i^{\s(t_\ell)}$ denote the system active on $(t_\ell,t_{\ell+1})$.  We compute the integral by breaking it up at switching points: $$\begin{array}{ll}\disg_0^te^{\k_i(s)}ds & = \diss_{\ell=1}^{N-1}\disg_{t_\ell}^{t_{\ell+1}}e^{\k_i(s)}ds\\
& = \diss_{\ell=1}^{N-1}e^{\k_i(t_\ell)}\disg_{t_\ell}^{t_{\ell+1}}e^{a_i^{\s(t_\ell)}(s-t_l)}ds \\
& =\diss_{\ell=1}^{N-1}e^{\k_i(t_\ell)}\frac{1}{a_i^{\s(t_\ell)}}(e^{a_i^{\s(t_\ell)}(t_{\ell+1}-t_\ell)}-1)\\
& = \diss_{\ell=1}^{N-1}\frac{1}{a_i^{\s(t_\ell)}}(e^{\k_i(t_{\ell+1})}-e^{\k_i(t_\ell)}).\end{array} $$ Define $\frac{1}{a_i^{\s(t_{0})}}:=0$, $\frac{1}{a_i^{\s(t_{N})}}:=0$, and $c:=\max\left\{\left|\frac{1}{a^\a}\right|+\left|\frac{1}{a^\b}\right|:\,1\leq \a,\b\leq k\right\}$ so that $$ \begin{array}{ll} \diss_{\ell=1}^{N-1}\frac{1}{a_i^{\s(t_\ell)}}(e^{\k_i(t_{\ell+1})}-e^{\k_i(t_\ell)}) & =\diss_{\ell=1}^{N}e^{\k_i(t_\ell)}(\frac{1}{a_i^{\s(t_{\ell-1})}}-\frac{1}{a_i^{\s(t_{\ell})}})\\ & \leq c\diss_{\ell=1}^{N}e^{\k_i(t_\ell)}.\end{array}$$

To bound the right-hand side, let $\d>0$ be arbitrary.  We choose $T_\d$ such that $\frac{1}{t}\k_i(t)<\kf_i^++\d$ whenever $t>T_\d$.  On the interval $[0,t]$, and on any closed subinterval thereof, $k_i(s)$ is bounded and takes maximum value.  Because $\kf_i^+\geq 0$, there is some $T_{T_\d}>0$ such that $ (\kf_i^++\d)t\geq \dismax_{s\in[0,T_\d]}\k(s)$ whenever $t>T_{T_\d}$.  Then for $T:=\max\{T_\d,T_{T_\d}\}$,  both $\dismax_{s\in[0,T_\d]}\k_i(s)\leq (\kf^++\d)t$ and $\k_i(t)<(\kf^++\d)t$ for all  $t>T$, and for such $t$, we have $$c\diss_{\ell=1}^Ne^{\k_i(t_\ell)}\leq cN(t)e^{(\kf_i^++\d)t},$$ as desired.\end{proof}

There is one more observation before beginning the proof. Because entropy looks at behavior ``in the limit'' we generally do not care what happens on some fixed initial segment.  In proof of the proposition, we pretend like a set which works for all time in the tail, works for all time, simply.  The lemma makes this remark more precise. 
\begin{lemma} Fix $\e>0$ and suppose that for every $\d>0$ there is some $T_\d>0$ such that for all $T>T_\d$ and any given set $S_\d(T,\e)$ which is $(T,\e)$-spanning on the restriction to $[T_\d,T]$---i.e., for all $x\in K$, there is some $\hx \in S_\d(T,\e)$ such that $||\ph(\cdot,x)-\ph(\cdot,\hx)||_{[T_\d,T]}$---then there is a discrete set of points $S(T_d)$ making $S(T,\e):=S(T_d)\cup S_\d(T,\e)$ a $(T,\e)$-spanning set.  Moreover, $$\dislimsup_{T\rightarrow\infty}\frac{1}{T}\log\#S(T,e)=\dislimsup_{T\rightarrow\infty}\frac{1}{T}\#S_\d(T,\e).$$\end{lemma}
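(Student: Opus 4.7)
The intuition is that the initial segment $[0,T_\d]$ is a fixed compact time interval, so all state trajectories originating in $K$ during this initial phase are controlled by a finite Lipschitz constant depending only on $T_\d$ (and on the matrices $A_i$), not on $T$. Hence covering $K$ to $\e$-accuracy on $[0,T_\d]$ requires only a bounded (in $T$) number of points, a correction that disappears after dividing by $T$ and taking the limit supremum.

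First, I would let $\Phi(t)$ denote the state-transition operator of the switched system, so that $\ph(t,x)=\Phi(t)x$, and set $M_\d := \dissup_{t\in[0,T_\d]}\|\Phi(t)\|$. By compactness of $[0,T_\d]$ together with the fact that $\Phi$ is a finite product of matrix exponentials over this interval, $M_\d<\infty$. Linearity then yields $\|\ph(\cdot,x)-\ph(\cdot,y)\|_{[0,T_\d]}\leq M_\d\|x-y\|$ for all $x,y\in K$. I would then choose $S(T_\d)$ to be a finite $\d_0$-net of $K$ in the infinity norm with $\d_0=\e/M_\d$; by compactness of $K$, its cardinality $C(T_\d,\e)$ does not depend on $T$, and each $x\in K$ is $(T_\d,\e)$-spanned by some $p\in S(T_\d)$.

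To establish that $S(T,\e):=S(T_\d)\cup S_\d(T,\e)$ is genuinely $(T,\e)$-spanning on $[0,T]$, I would work at scale $\e/2$ throughout (which inflates both sets by a bounded constant factor that gets absorbed) and pair each $p\in S(T_\d)$ with some $\hx_p\in S_\d$ that $\e/2$-approximates $p$ on $[T_\d,T]$; the triangle inequality then combines the $\e/2$-bound on $[0,T_\d]$ coming from $S(T_\d)$ with the tail bound from $S_\d$ to produce an $\e$-bound on $[0,T]$. For the asymptotic statement, from $\#S(T,\e)\leq \#S(T_\d)+\#S_\d(T,\e)$ and the constancy of $\#S(T_\d)$ in $T$, the inequality $\log(a+b)\leq \log 2 + \log\max\{a,b\}$ gives
\[
\dislimsup_{T\rightarrow\infty}\frac{1}{T}\log\#S(T,\e) \leq \dislimsup_{T\rightarrow\infty}\frac{1}{T}\log\#S_\d(T,\e),
\]
with the reverse inequality immediate since $S_\d(T,\e)\subseteq S(T,\e)$. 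The main obstacle I anticipate is the pairing step: ensuring that a single element of $S(T,\e)$ spans $x$ on the entire interval $[0,T]$ requires controlling the drift of $\ph(\cdot,x)-\ph(\cdot,p)$ on $[T_\d,T]$, which for a linear switched system can grow with $T-T_\d$. Hence some care is needed in choosing the net $S(T_\d)$ finely enough (or in matching each $p$ to a buddy in $S_\d$ so that the composite approximation closes up) to convert the two-piece approximation into a single-point $(T,\e)$-spanning guarantee.
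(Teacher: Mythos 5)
Your route is the same as the paper's: split $[0,T]$ at $T_\d$ using the fact that $\|\cdot\|_{[0,T]}$ is the max of the norms over $[0,T_\d]$ and $[T_\d,T]$, handle the head with a finite set whose cardinality does not grow with $T$ (you build it explicitly as an $\e/M_\d$-net with $M_\d=\sup_{t\in[0,T_\d]}\|\Phi(t)\|$; the paper invokes total boundedness of $\ph(T,K)$ in the $\|\cdot\|_{[0,\cdot]}$ norms), and observe that a $T$-independent correction to the cardinality is killed by $\frac{1}{T}\log(\cdot)$. Your counting step and the reverse inequality from $S_\d(T,\e)\subseteq S(T,\e)$ are both fine. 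One small caveat: $\Phi$ need not be a \emph{finite} product of exponentials on $[0,T_\d]$ under the paper's standing assumptions, but $M_\d\le e^{T_\d\max_j\|A_j\|}<\infty$ by Gronwall regardless.

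The obstacle you flag at the end is a genuine gap, and you should know that the paper's own one-line justification of the first claim does not close it either. A $(T,\e)$-spanning set must supply, for each $x\in K$, a \emph{single} point that $\e$-shadows $\ph(\cdot,x)$ on all of $[0,T]$. The element of $S_\d(T,\e)$ that works for $x$ on the tail need not be close to $x$ initially (in a strongly contracted direction, far-apart initial conditions have nearly identical tails), and the net point $p$ that works for $x$ on the head carries no information about $[T_\d,T]$; your pairing $p\mapsto\hx_p$ chains $x\to p\to\hx_p$ on the tail but only $x\to p$ on the head, so no single comparison point emerges. The repair is to replace the union by a common refinement: cover $K$ by the sets $U_{\hx}=\{x:\|\ph(\cdot,x)-\ph(\cdot,\hx)\|_{[T_\d,T]}<\e/2\}$ for $\hx$ in a tail-$(T,\e/2)$-spanning set, and by balls $V_p$ of radius $\e/(4M_\d)$ centered at the net points; choose one representative $z$ in each nonempty $U_{\hx}\cap V_p$. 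Such a $z$ then $\e$-shadows every $x\in U_{\hx}\cap V_p$ on the head (Lipschitz bound, since $\|x-z\|\le\e/(2M_\d)$) and on the tail (triangle inequality through $\hx$). The cardinality is now at most $\#S_\d(T,\e/2)\cdot C(T_\d,\e)$---a product rather than a sum---but since $\frac{1}{T}\log C(T_\d,\e)\to0$ this changes nothing in the ``Moreover'' clause, which is the only part of the lemma used downstream. So your instinct about where the difficulty lies is exactly right; the fix is to give up the literal union form of the statement rather than to tune the net more finely.
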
 
\begin{proof} By definition, $$||\ph(\cdot,x)-\ph(\cdot,\hx)||_{[0,T]} = \dismax\{||\ph(\cdot,x)-\ph(\cdot,\hx)||_{[0,T_\d]}, ||\ph(\cdot,x)-\ph(\cdot,\hx)||_{[T_\d,T]}\}.$$ The first claim is equivalent to the existence of a finite $(T,\e)$-spanning set, which follows directly from compactness of $\ph(K,T)$ w.r.t.\ the $||\cdot||_{[0,\cdot]}$ norms. The second follows from this: as $\#S(T_\d)<\infty$ is independent of $T$, we have $$\begin{array}{ll}\dislimsup_{T\rightarrow\infty}\frac{1}{T}\log\#S(T,e) & =\dislimsup_{T\rightarrow\infty}\frac{1}{T}\log\#(S_\d(T,e)\cup S(T_\d))\\ & \leq \dislimsup_{T\rightarrow\infty}\frac{1}{T}(\log\#S_\d(T,\e)+\log\#S(T_\d))\\ & = \dislimsup_{T\rightarrow\infty}\frac{1}{T}\log\#S_\d(T,\e),\end{array}$$ which is what we wanted to show.\end{proof}

\begin{proof}[Proof of Proposition]
	
	We fix notation:   we let $x_i\in \R$ denote the $i$-th entry of $x\in\R^n$ and we use $\ph_i$ to denote the $i$-th coordinate of solution $\ph(t,x)$ for the system. Furthermore, for matrix $A_j\in M_n(\R)$, we notate its entries as $$(A_j)_{\a,\b}=\left\{\begin{array}{lll}a_\a^j & \mbox{if} & \a=\b\\ b_{\a,\b}^j & \mbox{else,} & \end{array}\right.$$ where by assumption $b_{\a,\b}=0$ whenever $\a>\b$. 	 In this argument, we will apply variation of  constants with solutions $\ph_{i+1},\ldots,\ph_n$ as inputs in system $$\dx_i=f(x_i,\ldots,x_n)=a_i^\s x_i+\diss_{\b=i+1}^nb_{i,\b}^\s x_\b$$ in order to bound separation of solutions, and thereby construct $(T,\e)$-spanning sets.  By definition of the $\infty$-norm, on finite time horizon $[0,T]$ we have $$||\ph(\cdot,x)-\ph(\cdot,\hx)||_{[0,T]}= \dissup_{t\in [0,T]}||\ph(t,x)-\ph(t,\hx)|| = \dissup_{t\in[0,T]}\dismax_{i=1,\ldots,n}|\ph_i(t,x)-\ph_i(t,\hx)|.$$

As this latter expression is equal to $\dismax_{i=1,\ldots,n}\dissup_{t\in[0,T]}|\ph_i(t,x)-\ph_i(t,\hx)|$, we compute a bound on each  $\ph_i$, $i=1,\ldots,n$, starting from $i=n$.  Because $\dx_n= a_n^\s x_n$ is a scalar linear switched system, its solution is given by $$ \ph_n(t,x) = e^{\sum_{j=1}^ka_n^j\t_j(t)}x_n=e^{\k_n(t)}x_n.$$  As $\dislimsup_{t\rightarrow\infty} \frac{1}{t}\k_n(t)=\kf_n$, for $\d>0$ there is $T_\d'$ such that $\k_n(t)<(\kf_n+\d)t\leq (\kf_n^++\d)t$ for all $t>T_\d'$.  As $\k_n(s)$ is continuous, it is bounded on $[0,T_{\d}']$ and since $\kf_n^++\d>0$, there is $T_\d''$ such that $\dissup_{s\in[0,T_\d']}\k_n(s)<(\kf_n^++\d)t$ whenever $t>T_\d''$.  So for $T_\d:=\max\{T_\d',T_d''\}$, we have $\dissup_{s\in[0,T]}\k_n(s)<(\kf_n^++\d)T$ for all $T>T_\d$. Therefore, for such $T>T_\d$ and $t\in[0,T]$,  \begin{equation}|\ph_n(t,x)|\leq e^{\k_n(t)}|x_n|\leq e^{(\kf^+_n+\d)T}|x_n|= p(N(T))e^{(\kf_n^++q(\d)\d)T}|x_n|,\end{equation} where we write $p(x)=q(x)=1\in\R[x]$ only to anticipate the induction step.  

Applying the argument above with $T_\d'$ and $T_\d''$, for each $\k_i$, we have for some $T_\d(i)$ that $\dissup_{s\in[0,T]}\k_i(s)<(\kf_i^++\d)T$ whenever $T>T_\d(i)$, and from here onward we will let $T_\d=\max\{T_\d(1),\ldots,T_\d(n)\}$. 
	
	We argue recursively, $(n-1)\rightarrow 1$, and we first start with an explicit computation of the second case when $i=n-1$. For $\dx_{n-1} = a_{n-1}^\s x_{n-1}+b_{n-1,n}^\s x_n$, by variation of constants, $$\begin{array}{ll} \ph_{n-1}(t,x) & = e^{\k_{n-1}(t)}(x_{n-1}+\disg_0^tb_{n-1,n}^{\s(s)}\ph_n(s,x)e^{-\k_{n-1}(s)}ds).\end{array}$$  Notice that we factor out $\k_{n-1}(t)$ from the integral since $\k_{n-1}(t-s)=\k_{n-1}(t)-\k_{n-1}(s)$, which follows directly from the definition of $\k_{n-1}(t)$. 
	
	 Let $b:=\max\{|b_{\a,\b}^j:\,1\leq j\leq k,\,1\leq \a,\b\leq n\}$ and first assuming that $\kf_{n-1}^+>0$ bound the solution as: 
	
\begin{equation}\label{eq:firstbound} \begin{array}{ll} |\ph_{n-1}(t,x)| & =
\left|e^{\k_{n-1}(t)}\big(x_{n-1}+\disg_0^tb_{n-1,n}^{\s(s)}\ph_n(s,x)e^{-\k_{n-1}(s)}ds\big)\right| \\ 
& \leq e^{\k_{n-1}(t)}\big(|x_{n-1}|+b\disg_0^t|\ph_n(s,x)|e^{-\k_{n-1}(s)}ds\big) \\
& \leq e^{(\kf_{n-1}^++\d)T}\big(|x_{n-1}|+b\disg_0^t|\ph_n(s,x)|e^{-\k_{n-1}(s)}ds\big)\\
& \leq e^{(\kf_{n-1}^++\d)T}\big(|x_{n-1}|+b\disg_0^te^{(\kf_n^++\d)T}|x_n|e^{-\k_{n-1}(s)}ds\big)\\
& =e^{(\kf_{n-1}^++\d)T}\big(|x_{n-1}|+be^{(\kf_n^++\d)T}\disg_0^te^{-\k_{n-1}(s)}ds|x_n|\big).\\
\end{array}\end{equation}

So far we only applied the bound from the previous computation on $\ph_n$ and the exponential bound from $\dislimsup_{t\rightarrow\infty} \frac{1}{t}\k_{n-1}(t)=\kf_{n-1}$.  

Continuing with the computation, now applying the particular case with $\kf_i^+<0$ of  Lemma \ref{lemma:bound}, we obtain:

\begin{equation}\label{eq:preinduction}\begin{array}{ll}|\ph_{n-1}(t,x)| & \leq e^{(\kf_{n-1}^++\d)T}\big(|x_{n-1}|+be^{(\kf_n^++\d)T}cp(N(T))e^{\d T}|x_n| \big)\\
& \leq e^{(\kf_{n-1}^++\d)T}\big(|x_{n-1}|+p'(N(T))e^{(\kf_n^++\d)T}e^{\d T}|x_n| \big)\\
& \leq p''(N(T))\big(e^{(\kf_{n-1}^++q_{n-1}(\d)\d)T}|x_{n-1}|+e^{(\kf_n^++\kf_{n-1}^++q_n(\d)\d)T}|x_n|\big)\\
& \leq p''(N(T))\big(e^{(\kf_{n-1}^++q(\d)\d)T}|x_{n-1}|+e^{(\kf_n^++\kf_{n-1}^++q(\d)\d)T}|x_n|\big).\end{array}\end{equation}

Here and in subsequent computations, we use primes to distinguish polynomials which satisfy the inequalities or equalities.  They are not derivatives.  In the preceding sequence, $p'\geq bcp$, $p''\geq \max\{1,p'\}$, $q_{n-1}\geq 1$,  $q_n\geq 3$, and $q\geq \max\{q_{n-1},q_n\}$. 

If, on the other hand, $\kf_{n-1}^+<0$, then we modify the transition from the second to third line in \eqref{eq:firstbound} and following as: $$\begin{array}{l} 
  e^{\k_{n-1}(t)}\big(|x_{n-1}|+b\disg_0^t|\ph_n(s,x)|e^{-\k_{n-1}(s)}ds\big) \\ \leq  e^{(\kf_{n-1}^++\d)t}|x_n-1|+e^{\k_{n-1}(t)}b\disg_0^t|\ph_n(s,x)|e^{-\k_{n-1}(s)}ds\\
 \leq   e^{(\kf_{n-1}^++\d)T}|x_{n-1}|+e^{\k_{n-1}(t)}b\disg_0^te^{(\kf_n^++\d)T}|x_n|e^{-\k_{n-1}(s)}ds\\
\leq   e^{(\kf_{n-1}^++\d)T}|x_{n-1}|+e^{\k_{n-1}(t)}b\disg_0^te^{-\k_{n-1}(s)}dse^{(\kf_n^++\d)T}|x_n|
\end{array}$$ and as $||e^{\k_{n-1}(\cdot)}||_{\Lc_\infty}<\infty$, the term $e^{\k_{n-1}(t)}b\disg_0^te^{-\k_{n-1}(s)}ds\leq bc\disg_0^te^{-\k_{n-1}(s)}ds$, and we apply the lemma again to obtain an expression as in \eqref{eq:preinduction}.

Now for the induction argument; let $T>T_\d$ and assume for $j<i$ that \begin{equation}\label{eq:inductioninequality} ||\ph_{n-j}(\cdot,x)||_{[0,T]}\leq p(N(T))\diss_{\ell=0}^je^{(\kf_{n-j}^++\ldots+\kf_{n-j+\ell}^++q(\d)\d)T}|x_{n-j+\ell}|,\end{equation}  and we want to show that $$ ||\ph_{n-i}(\cdot,x)||_{[0,T]}\leq p'(N(T))\diss_{\ell=0}^ie^{(\kf_{n-i}^++\ldots+\kf_{n-i+\ell}^++q'(\d)\d)T}|x_{n-i+\ell}|$$ for polynomials $p$, $p'$, $q$, and $q'$.

Indeed, $|\ph_{n-i}(\cdot,x)||_{[0,t]}$ is bounded above by $$\begin{array}{l}   e^{\k_{n-i}(t)}\big(|x_{n-i}|+\diss_{\ell=1}^i\disg_0^tb|\ph_{n-i+\ell}(s,x)|e^{-\k_{n-i}(s)}ds\big)\\ 
\leq e^{\k_{n-i}(t)}\big(|x_{n-i}\\ \;\;\;\;\;\;\;\;+\diss_{\ell}^ib\disg_0^t\diss_{\ell'=0}^{i-\ell}p_{\ell,\ell'}(N(T))e^{(\kf_{n-i+\ell}^++\ldots+\kf_{n-i+\ell+\ell'}^++\q_{\ell,\ell'}(\d)\d)T}|x_{n-i+\ell+\ell'}|e^{-\k_{n-i}(s)}ds\big),
\end{array}$$ where we apply induction to each $|\ph_{n-i+\ell}|$ inside the integrand, and we can wlog take polynomials $p(N(T))>\dismax_{\ell,\ell'}p_{\ell,\ell'}(N(T))$ and $q(\d)>\dismax_{\ell,\ell'}q_{\ell,\ell'}(\d)$.

Then this last expression is bounded above by $$\begin{array}{ll} &  e^{\k_{n-i}(t)}\big(|x_{n-i}|+p(N(T))\diss_{\ell=1}^i\diss_{\ell'=0}^{i-\ell}\disg_0^te^{(\kf_{n-i+\ell}^++\ldots+\kf_{n-i+\ell+\ell'}^++q(\d)\d)T}|x_{n-i+\ell+\ell'}|e^{-\k_{n-i}(s)}ds\big)\\
 =&  e^{\k_{n-i}(t)}\big(|x_{n-i}|+p(N(T))\diss_{\ell=1}^i\big(\diss_{\ell=0}^{\ell-1}e^{(\kf_{n-i+\ell}^+\ldots+\kf_{n-i+\ell-\ell'}^++q(\d)\d)T}\big)|x_{n-i+\ell}|\disg_0^te^{-\k_{n-i}(s)}ds\big)\\
 \leq&  e^{\k_{n-i}(t)}|x_{n-i}|+p'(N(T))\diss_{\ell=1}^ie^{\kf_{n-i+\ell}^++\ldots+\kf_{n-i+1}^++q(\d)\d)T}|x_{n-i+\ell}|e^{\k_{n-i}(t)}\disg_0^te^{-\k_{n-i}(s)}ds,
\end{array}$$ and we are left with evaluating $e^{\k_{n-i}(t)}\disg_0^te^{-\k_{n-i}(s)}ds$.  When $\kf_{n-i}>0$, we upper bound this by $p(N(T))e^{(\kf_{n-i}+q(\d)\d)T}=p(N(T))e^{(\kf_{n-i}^++q(\d)\d)T}$ as in the proof of the preceding proposition.  Otherwise, $\kf_{n-i}<0$ and $\dissup_{s\in[0,\infty)}e^{\k_{n-i}(s)}<\infty$, so by Lemma \ref{lemma:bound}, we upper bound this also by $$\big(\dissup_{s\in[0,\infty)}e^{\k_{n-i}(s)}\big)p(N(T))e^{(\kf^+_{n-i}+q(\d)\d)T}\leq p'(N(T))e^{(\kf_{n-i}^++q(\d)\d)T}.$$

Then in particular, $$|\ph_1(t,x)|\leq p(N(T))\big(e^{(\kf_1+q(\d)\d)T}|x_1|+\ldots+e^{(\kf_1+\ldots+\kf_n+q(\d)\d)T}|x_n|\big).$$
		
	Thus our bound on separation of solutions is given by: $$\begin{array}{ll}||\ph(\cdot,x)-\ph(\cdot,\hx)||_{[0,T]}& \leq \dismax_{i=1,\ldots,n}|\ph_i(t,x)-\ph_i(t,\hx)|\\ & \leq p(N(T))(e^{(\kf^+_1+q(\d)\d)T}|x_1-\hx_1|+\ldots+e^{(\kf^+_1+\ldots+\kf^+_n+q(\d)\d)T}|x_n-\hx_n|).\end{array}$$  In order for a canonically constructed grid of equally spaced points along coordinates to be $(T,\e)$-spanning, a sufficient condition is given by $$|x_i-\hx_i|<\frac{2\e e^{-(\kf^+_1+\ldots+\kf^+_i+q(\d)\d)T}}{np(N(T))}=\frac{1}{p'(N(T),\e)}e^{-(\kf^+_1+\ldots+\kf^+_i+q(\d)\d)T}$$ for each $i=1,\ldots,n$.  On an interval of length $1=\m_1(\p_i(K))$, a grid of $$\big(\frac{1}{p(N(T))}e^{-(\kf^+_1+\ldots+\kf^+_i+q(\d)\d)T}\big)^{-1}=p(N(T))e^{(\kf^+_1+\ldots+\kf^+_i+q(\d)\d)T}$$ points will suffice, or in other words $$\#s(T,\e)\leq p(N(T))\disp_{i=1}^ne^{(\kf^+_1+\ldots+\kf^+_i+q(\d)\d)T}.$$
	
	Taking log, dividing by $T$, applying limit supremum in $T$ then limit in $\e$, we obtain \small $$\begin{array}{ll} h(A_\s)&  \leq \disl_{\e\rightarrow0}\dislimsup_{T\rightarrow\infty} \frac{1}{T}\log P(N)\disp_{i=1}^ne^{(\kf^+_1+\ldots+\kf^+_i+q(\d)\d)T}\\ & =\dislimsup_{T\rightarrow\infty} \frac{1}{T}(\log P(N))+\diss_{i=1}^n(\kf^+_1+\ldots+\kf^+_i+nq(\d)\d),\end{array}$$ which proves that $h(A_\s)\leq \diss_{i=1}^n\kf^+_1+\ldots+k_i+nq(\d)\d$.  Now with each inequality in the preceding argument,  we modified the polynomial $q(\cdot)$ in the exponent at most once, but there were only finitely many steps, so we end up with a final $\tilde{q}\in\R[x]$ taking $\d$ as an argument for which $\displaystyle\inf_{\d>0}\tilde{q}(\d)\d=0$.  As $\d>0$ was arbitrary, we therefore conclude that $$h(A_\s)\leq \disinf_{\d>0}\diss_{i=1}^n\kf^+_1+\ldots+k_i+n\tilde{q}(\d)\d=\diss_{i=1}^n(\kf_1^++\ldots+\kf_i^+),$$ completing the proof.\end{proof}

\end{document}